\newcommand{\mathsym}[1]{{}}
\newtheorem{theorem}{Theorem}%[section]
\newtheorem{lemma}{Lemma}%[section]
\newtheorem{proposition}[lemma]{Proposition}%[section]
\newtheorem{corollary}[lemma]{Corollary}%[section]
\newtheorem{definition}{Definition}%[section]
\newtheorem{conjecture}{Conjecture}
\def \seq #1 #2 {#1_1#1_2\dots #1_{#2}}
\def \tauu #1 {$\tau_1,\tau_2\dots,\tau_{#1}$}
\newcommand{\s}{S}
\begin{document}
\begin{frontmatter}
\title{Combinatorial bijections from hatted avoiding permutations in $S_n(132)$ to generalized Dyck and Motzkin paths}
%\title{On restrictions of $132$-avoiding permutations for Dyck and Motzkin paths and their generalizations}
%\title[Restrictions on $132-$ avoiding permutations]{Restrictions on $132-$ avoiding permutations and its relations to some classical sequences}
\author[fn1]{Phan Thuan DO}
\address[fn1]{Department of Computer Science, Hanoi University of Science and Technology, 04 Dai Co Viet road, Hanoi, Vietnam}
\ead{thuandp@soict.hut.edu.vn}
\author[fn2]{Dominique ROSSIN}
\ead{rossin@lix.polytechnique.fr}
\address[fn2]{Laboratoire d'Informatique, $\acute{E}$cole Polytechnique, Route de Saclay, 91128 PALAISEAU Cedex, France}
%\author[fn2,fn3,cor1]{H.D. Phan}
%\address[fn3]{Liafa, University of Denis Diderot-Paris 7, Case 7014-2, Place Jussieu 75256 Paris, Cedex 05, France}
%\ead{phanhaduong@math.ac.vn and phan@liafa.jussieu.fr}
\author[fn3]{Thi Thu Huong TRAN}
\ead{ttthuong@math.ac.vn}
\address[fn3]{Institute of Mathematics, 18 Hoang Quoc Viet, Hanoi, Vietnam}
%\cortext[cor1]{Corresponding author. Email: phanhaduong@math.ac.vn@math.ac.vn}
\date{}
\begin{abstract} We introduce a new concept of permutation avoidance pattern called hatted pattern, which is a natural generalization of the barred pattern. We show the growth rate of the class of permutations avoiding a hatted pattern in comparison to barred pattern. We prove that Dyck paths with no peak at height $p$, Dyck paths with no $ud\dots du$ and Motzkin paths are counted by hatted pattern avoiding permutations in $\s_n(132)$ by showing explicit bijections. As a result, a new direct bijection between Motzkin paths and permutations in $\s_n(132)$ without two consecutive adjacent numbers is given. These permutations are also represented on the Motzkin generating tree based on the Enumerative Combinatorial Object (ECO) method.  
\end{abstract}
\begin{keyword}
Restricted permutations, Dyck paths, Motzkin paths, permutation avoidance pattern, hatted pattern, ECO
\end{keyword}
\end{frontmatter}

\section{Introduction} \label{S:1}
Studying restricted permutations bijectively related to known combinatorial objects has always received great attention \cite{BDPP00,BD06,Cal07,CK08,Kra01,Stu09,Wes95}. A permutation $\pi$ of length $n$ is called avoiding a permutation $\tau$ of length $k$, called a pattern $\tau$, if $\pi$ does not contain any subsequence of length $k$ order-isomorphic to $\tau$. Two of the essential studies relevant to restriction permutations are to enumerate permutations avoiding all patterns in a given set, and to find a set $T$ of permutation patterns such that permutations avoiding $T$ count known combinatorial sequences as Catalan, Motzkin, Schr{\"o}der, Fibonacci and many others in Sloane \cite{Sloane}. The first one has been done partly for some specific sets of patterns where almost patterns are of length $\leq 4$. A detailed statistic of enumerated patterns is given by Elizalde \cite{Eli04}. Marcus and Tardos in \cite{MT04} proved the Wilf-Stanley's conjecture, which says that the number of permutations of length $n$ avoiding a pattern is bounded by a power function of $n$ which is much asymptotically smaller than the number of permutations. For the second one, many types of generalized permutation patterns, such as barred, dashed, dotted patterns are introduced \cite{BS00,Bar11,Pud10}. We are specially interested in the barred pattern introduced in an exposition by Pudwell \cite{Pud10}. In our point of view, the barred pattern is much meaningful because a numerous essential sequences are counted by permutations avoiding a mixture of original patterns and barred patterns. Furthermore, although Wilf-Stanley's conjecture does not hold for the barred pattern, we still expect to generalize it into other one such that the number of permutations restricted on it grows either fast enough as the number of unrestricted permutations or slow enough as the number of permutations avoiding an original pattern. 

In combinatorics, showing bijective proofs between two finite sets with same cardinality has been specially got great interests beside proofs using generating functions or recursive formulas. In this context, a list of combinatorial bijections between Dyck paths of length $2n$ and permutations of length $n$ avoiding a pattern of length $3$ is presented in \cite{CK08}. Many of them are induced as compositions of different bijections where one of their  components is often the standard bijection, which is between Dyck paths and permutations avoiding $132$, given by Knuth \cite{Knu69,Knu73}.  

In this paper, we introduce a new type of permutation pattern, called hatted pattern, which is a natural generalization of the barred pattern where the avoiding condition is made weaker. We show all cases in which the barred pattern coincides to the hatted pattern and prove that Wilf-Stanley's conjecture does not hold for a class of hatted patterns (Section \ref{S:2}). By developing various versions of the standard bijection, we point out some hatted patterns and prove that Dyck paths with no peaks at a given height, Dyck paths without $udd\dots du$ are bijective to permutations avoiding $132$ and one of these hatted patterns  (Section \ref{S:3}). Furthermore, Callan \cite{Cal04} provides a bijection from Dyck paths of length $(2n+2)$ with no $udu$ to Motzkin of length $n$. To the best of our knowledge although there are a lot of bijections to restricted permutations from Dyck paths \cite{Cal07,CK08,Kra01,Stu09}, there has not been yet any one from Motzkin paths. This suggests us to construct a direct bijection between them. We present such bijection from Motzkin paths of length $n$ to $(n+1)$-length permutations avoiding $132$ and without any appearance of two adjacent consecutive integers in Section~\ref{S:4}. Finally, based on the ECO method, we show that the Motzkin generating tree given in \cite{BDPP00} is also coded by these permutations. 

\section{Hatted pattern}\label{S:2}
In this section, we introduce a new type of pattern, called \emph{hatted pattern} after giving definitions of permutation pattern, barred pattern. We notice that the hatted pattern is a natural generalization of the barred pattern and show all patterns in which the hatted pattern is properly different to the barred pattern. We show that while the number of permutations avoiding one of those barred pattern grows in a exponential function, the number of permutations avoiding one of those hatted pattern grows in factorial function. 

Let $q=q_1\dots q_m$ be a string of numbers. The \emph{reduction} of $q$, denoted by $red(q)$, is the string obtained from $q$ by replacing $i$th smallest element of $q$ by $i$. For example $red(357136)=235124$ and if $q$ is a string of different numbers then $red(q)$ is a permutation. We denote by $\s_n$ the set of permutations on $[n] = \{1,2,\dots,n\}$. Permutations in this paper are written in one-line notation. Let $\tau=\tau_1\dots\tau_k\in\s_k$. A permutation $\pi=\pi_1\pi_2\dots\pi_n \in \s_n$ is called \emph{containing} the pattern $\tau$ if there exists a sequence of indices $1\leq i_1<i_2<\cdots<i_k\leq n$ such that $red(\pi_{i_1}\pi_{i_2}\dots\pi_{i_k})=\tau$. Otherwise, $\pi$ avoids $\tau$ or $\pi$ is $\tau$-avoiding.  

Denote by $\tau_{(i)}$ the permutation which is identical to $\tau$ with the element $\tau_i$ is marked. We say that \emph{$\pi$ avoids $\tau_{(i)}$ by barred type}, or simply \emph{$\pi$ avoids $\bar\tau_{(i)}$} where the element $\tau_{i}$ of $\tau$ is now marked by a bar, if every subsequence of $(k-1)$ elements $\pi_{j_1}\pi_{j_2}\dots\pi_{ j_{k-1}}$ satisfying  $$red(\pi_{j_1}\pi_{j_{2}}\dots\pi_{j_{k-1}})=red(\tau\backslash \tau_i),$$ can be expanded into a subsequence $\pi_{j_1}\cdots\pi_{j_{i-1}}\pi_{j_*}\pi_{j_{i}}\cdots \pi_{j_k}$ such that  $$red(\pi_{j_1}\cdots\pi_{j_{i-1}}\pi_{j_*}\pi_{j_{i}}\cdots \pi_{j_k})=\tau.$$

Furthermore, $\pi$ is called \emph{avoiding $\tau_{(i)}$ by hatted type}, or simply \emph{$\pi$ avoids $\hat\tau_{(i)}$} where the element $\tau_i$ is now marked by a hat, if every subsequence of $(k-1)$ elements $\pi_{j_1}\pi_{j_2}\dots\pi_{ j_{k-1}}$ satisfying  $$red(\pi_{j_1}\dots\pi_{j_{k-1}})=red(\tau\backslash \tau_i),$$ %then there exists subsequence $\pi_{t_1},\pi_{t_2},\dots,\pi_{t_k}$ of $\pi$ such that $$red(\pi_{t_1},\pi_{t_2},\dots,\pi_{t_k})=\tau$$ and $$\{j_1,\dots, j_{k-1}\}\subset \{t_1,\dots,t_k\}.$$
can be expanded into a subsequence of $k$ elements $\pi_{t_1}\pi_{t_{2}}\dots \pi_{t_k}$ of $\pi$ such that $\{j_1,j_2,\dots, j_{k-1}\} \subseteq \{t_1,t_2,\dots, t_{k}\}$ and $$red(\pi_{t_1}\pi_{t_2}\dots \pi_{t_{k}})=\tau.$$ 

Given a set of patterns $T$ (may contain barred and hatted patterns), we are interested in enumerating $S_n(T)$, the set of permutations of length $n$ avoiding all patterns in $T$. 

\noindent 
{\bf Example:} 
$$\s_3(21)=\{123\}, \s_3(\bar 21)=\emptyset, \s_3(\hat 21)=\{321, 312, 231\}.$$

\noindent{\bf Remark:} %Pattern avoiding permutations and barred pattern avoiding permutations have been introduced and proved  having the relations to many enumerative combinatorial objects \cite{Pud12,SS85,}. The followings are direct from the definitions
\begin{enumerate}
\item If $\pi$ avoids $\hat\tau$, then $\pi$ avoids $\tau$ for all subsequences of $\pi$ avoiding $\tau\backslash \tau_i$;
\item The added element in each expanded subsequence in the definition of the barred pattern avoiding permutation plays exactly the role as the barred element in the pattern, whereas this may be more active in the one of hatted pattern avoiding permutation (see Figure \ref{F:pos_hat}). Hence, $S_n(\bar\tau_{(i)})\subseteq S_n(\hat\tau_{(i)})$ and the reverse inclusion is not true in general. For example, $2143\in \s_4(2\hat 13)$ and $2143\notin \s_4(2\bar 13)$ since the increasing subsequence $13$ cannot be extended into pattern $213$ by barred type. Later, we will give a characterization for patterns where this equality holds. 
\end{enumerate}
\begin{figure}[h]
\centering
\subfigure[into pattern $213$ by barred type] {\label{F:pos1}\includegraphics[height=3.5cm, width=5cm]{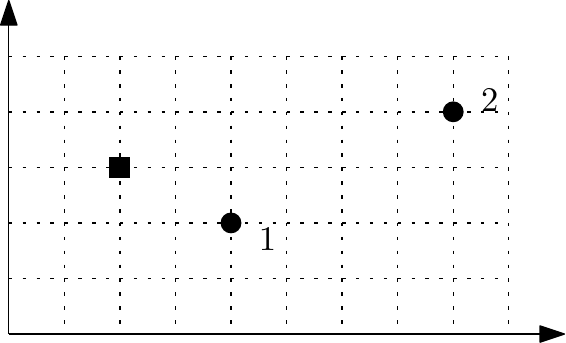}}\hspace{1cm}
\subfigure[into pattern $213$ by hatted type] {\label{F:pos2}\includegraphics[height=3.5cm, width=5cm]{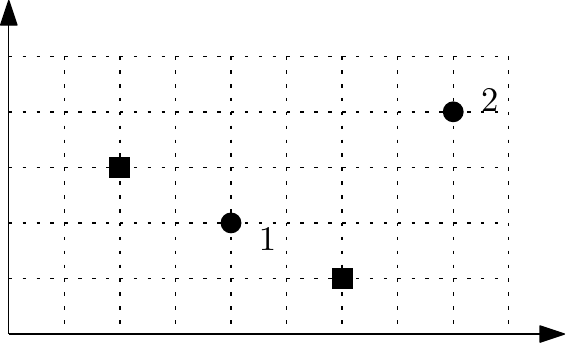}}
\caption{Possible positions to expand a pattern $12$}\label{F:pos_hat}
\end{figure}

%\end{definition}

The following lemma is implied from the definition. 
\begin{lemma}\label{L:hat1}
Let $\tau\in \s_k$ and $1\leq i,j\leq k$ such that $red(\tau\backslash \tau_i)=red(\tau\backslash \tau_j)$. Then  $\s_n(\hat\tau_{(i)})=\s_n(\hat\tau_{(j)})$.
\end{lemma}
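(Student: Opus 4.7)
The plan is to unpack the two definitions and observe that, under the hypothesis $red(\tau\backslash \tau_i)=red(\tau\backslash \tau_j)$, they produce literally the same condition on $\pi$. Fix $\pi\in\s_n$ and read the definition of $\hat\tau_{(i)}$-avoidance: one considers every $(k-1)$-element subsequence $\pi_{j_1}\cdots\pi_{j_{k-1}}$ of $\pi$ whose reduction equals $red(\tau\backslash \tau_i)$, and demands that it can be enlarged by one extra letter of $\pi$ to a $k$-letter subsequence whose reduction is $\tau$. The index $i$ enters this definition \emph{only} through the word $red(\tau\backslash \tau_i)$, which selects which $(k-1)$-letter subsequences of $\pi$ have to be tested; the expansion obligation itself mentions only $\tau$.

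By hypothesis $red(\tau\backslash \tau_i)=red(\tau\backslash \tau_j)$, so the family of subsequences to be tested is identical for the two definitions, and the condition imposed on each of them (extendability inside $\pi$ to a copy of $\tau$) is also identical. Consequently $\pi$ satisfies the defining clause of $\hat\tau_{(i)}$-avoidance if and only if it satisfies that of $\hat\tau_{(j)}$-avoidance, from which $\s_n(\hat\tau_{(i)})=\s_n(\hat\tau_{(j)})$ follows immediately.

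There is essentially no technical obstacle; the argument is a definitional unpacking, consistent with the authors' announcement that the lemma is implied from the definition. The one point worth flagging, perhaps in a short comment following the proof, is that the same argument \emph{fails} for the barred version: the barred clause requires the new letter to occupy the specific slot $i$ in the pattern, so even when $red(\tau\backslash \tau_i)=red(\tau\backslash \tau_j)$ the two avoidance conditions differ, as illustrated by $2143\in\s_4(2\hat 1 3)\setminus\s_4(2\bar 1 3)$ noted earlier. This contrast is what makes the hatted version behave more robustly under the permissible choices of the marked letter.
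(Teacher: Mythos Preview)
Your proof is correct and matches the paper's approach: the authors give no explicit proof, stating only that the lemma ``is implied from the definition,'' and your argument is precisely the definitional unpacking they have in mind. One minor remark: the example $2143\in\s_4(2\hat 1 3)\setminus\s_4(2\bar 1 3)$ you cite at the end illustrates the distinction between hatted and barred avoidance for the \emph{same} marked position, not the failure of the barred analogue of this lemma (which would require exhibiting $\s_n(\bar\tau_{(i)})\neq\s_n(\bar\tau_{(j)})$ with $red(\tau\backslash\tau_i)=red(\tau\backslash\tau_j)$); the tangential comment is fine in spirit but the example does not quite fit.
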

Now, let $\chi$ be one of the three trivial bijections on permutations (reverse, complement, inverse). The next lemma is straightforward from the definition. 

\begin{lemma}\label{L:hat2}
 Let $\tau,\tau^\prime\in \s_k$ and $1\leq i,j\leq k$. If the map $\chi$ satisfies
\begin{itemize}
	\item $\chi(\tau)= \tau^\prime,$
	\item $\chi(red(\tau\backslash \tau_i))= red(\tau^\prime\backslash\tau^\prime_j),$
	%\item the hatted element in $\hat\tau_2$ is one of the elements that if we remove it  the pattern made up by $k-1$ other elements is $\tau_2^*$;
\end{itemize}
then  
$\chi: \s_n(\hat\tau_{(i)}) \to \s_n(\hat\tau^\prime_{(j)})$ 
is a bijection. Consequently, $|\s_n(\hat\tau_{(i)})| =|\s_n(\hat\tau^\prime_{(j)})|$. 
\end{lemma}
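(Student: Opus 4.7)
The plan is to reduce the statement to the well-known invariance of pattern-reduction under the three trivial bijections, and then to push the ``expansion'' clause of the hatted condition through $\chi$ by checking that the corresponding transformation of index sets preserves inclusions.

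First I would spell out, for each $\chi\in\{\text{reverse, complement, inverse}\}$, the bijection $\Phi=\Phi_\pi$ it induces on subsets of $[n]$. For the reverse, positions $j_1<\dots<j_m$ in $\pi$ are sent to $n+1-j_m<\dots<n+1-j_1$ in $\chi(\pi)$; for the complement, positions are unchanged; for the inverse, an index set $J\subseteq[n]$ in $\pi$ becomes the value set $\pi(J)$, which serves as the new index set in $\pi^{-1}$. In every case $\Phi$ preserves inclusion, and the subsequence of $\chi(\pi)$ at positions $\Phi(J)$ reduces precisely to $\chi\bigl(red(\pi_J)\bigr)$.

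Next I would show that $\chi$ sends $\s_n(\hat\tau_{(i)})$ into $\s_n(\hat\tau^\prime_{(j)})$. Pick $\pi\in\s_n(\hat\tau_{(i)})$ and a $(k-1)$-element subsequence of $\chi(\pi)$ at positions $J'$ reducing to $red(\tau^\prime\setminus\tau^\prime_j)$. Pulling back via $\Phi^{-1}$ yields a subsequence of $\pi$ at positions $J=\Phi^{-1}(J')$; applying the involutive $\chi$ to both bulleted hypotheses, this subsequence reduces to $red(\tau\setminus\tau_i)$. The hatted-avoidance of $\pi$ then supplies a superset $T\supseteq J$ of size $k$ whose subsequence in $\pi$ reduces to $\tau$. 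Pushing forward through $\Phi$, the set $T'=\Phi(T)$ contains $J'$, has size $k$, and indexes a subsequence of $\chi(\pi)$ that reduces to $\chi(\tau)=\tau^\prime$, which is exactly the hatted condition for $\hat\tau^\prime_{(j)}$ at $\chi(\pi)$.

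Finally, since each of reverse, complement and inverse is its own inverse, applying the same reasoning with $(\tau,i)$ and $(\tau^\prime,j)$ interchanged shows that $\chi$ also maps $\s_n(\hat\tau^\prime_{(j)})$ into $\s_n(\hat\tau_{(i)})$; hence $\chi$ restricts to a bijection and the cardinalities coincide. The only delicate point is the inverse case, where $\Phi_\pi$ genuinely depends on $\pi$: one must verify that $J\subseteq T\Longleftrightarrow \pi(J)\subseteq\pi(T)$ and that the reduction of $\pi_J$ equals the inverse of the reduction of $(\pi^{-1})_{\pi(J)}$. This is pure bookkeeping, but it is where a careful write-up would have to spend most of its effort.
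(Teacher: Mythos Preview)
Your argument is correct and is precisely the ``straightforward from the definition'' that the paper invokes in lieu of a proof: the paper states Lemma~\ref{L:hat2} without proof, and your write-up supplies the routine verification that each of the three trivial involutions carries index sets to index sets in an inclusion-preserving way and intertwines reduction with the corresponding operation on patterns. There is nothing to compare beyond that; your only addition is the honest acknowledgment that the inverse case requires a line of bookkeeping.
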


%It is remarkable that there could be more than one image $\hat\tau_2$ of $\hat\tau_1$ by $\chi$.
For instance, we have 
\begin{itemize}
	\item $|\s_n(143\hat2)|=|\s_n(\hat2341)|=|\s_n(2\hat341)|=|\s_n(23\hat41)|$ (by Lemma \ref{L:hat2} where $\chi$ is the reverse and Lemma \ref{L:hat1});
	\item $|\s_n(143\hat2)|=|\s_n(412\hat3)|=|\s_n(41\hat23)|=|\s_n(4\hat123)|$ (by Lemma \ref{L:hat2} where $\chi$ is the complement and Lemma \ref{L:hat1});
	\item $|\s_n(4\hat123)|=|\s_n(\hat2341)|=|\s_n(2\hat341)|=|\s_n(23\hat41)|$ (by Lemma \ref{L:hat2} where $\chi$ is the inverse and Lemma \ref{L:hat1}).
\end{itemize}
%Simion-Schmidt$(1\hat432)=\hat1234=12\hat34$.

%In addition to the trivial bijections above, we will prove that Lemma \ref{L:hat2} still holds for the bijection given by Simion-Schimidt \cite{SS85} in Section \ref{S:33}.  

To finish this section, we show the exact relation between hatted pattern and barred pattern
\begin{proposition}\label{P:equ} $S_n(\hat\tau_{(i)})=S_n(\bar\tau_{(i)})$ for all $n=1,2,\dots$ if and only if $\tau_i$ is different from $\tau_{i-1}-1, \tau_{i-1}+1, \tau_{i+1}-1, \tau_{i+1}+1$. 
\end{proposition}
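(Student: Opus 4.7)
The plan is to establish $\s_n(\bar\tau_{(i)}) \subseteq \s_n(\hat\tau_{(i)})$ for free (Remark 2) and then to analyze when the reverse inclusion holds by tracking, for an arbitrary hatted extension of a $(k-1)$-subsequence $s_1 s_2 \cdots s_{k-1}$ matching $\tau \setminus \tau_i$, the pattern position $l$ at which the extra element $v$ is placed. The barred case is exactly $l = i$, so the proposition reduces to the statement that non-canonical extensions ($l \neq i$) can occur in some $\pi$ if and only if $\tau_i$ is value-adjacent to one of $\tau_{i-1}, \tau_{i+1}$.

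For the $\Leftarrow$ direction, assume $\tau_i \notin \{\tau_{i-1}\pm 1, \tau_{i+1}\pm 1\}$. Suppose $v$ is inserted at some $l < i$; then the element landing in position $i$ of the extended $k$-sequence is $s_{i-1}$, whose rank in the $(k-1)$-sequence equals $\bar\tau_{i-1} = \tau_{i-1} - [\tau_{i-1} > \tau_i]$, and after insertion this rank becomes $\bar\tau_{i-1} + [v < s_{i-1}]$. Matching $\tau$ forces the new rank to equal $\tau_i$, and splitting into the two sub-cases $\tau_{i-1} \lessgtr \tau_i$ one sees that the resulting equation has a solution with $[v<s_{i-1}] \in \{0,1\}$ only when $|\tau_i - \tau_{i-1}| = 1$, contradicting the hypothesis. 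A mirror argument, using $s_i$ in place of $s_{i-1}$, rules out $l > i$ and forces $|\tau_i - \tau_{i+1}|=1$. Hence every hatted extension is already canonical, giving $\s_n(\hat\tau_{(i)}) \subseteq \s_n(\bar\tau_{(i)})$.

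For the $\Rightarrow$ direction I may assume by Lemma~\ref{L:hat2} that $\tau_i = \tau_{i-1} - 1$ (the three other adjacency cases being symmetric), and I propose $\pi = \tau \in \s_k$ as a witness in $\s_k(\hat\tau_{(i)}) \setminus \s_k(\bar\tau_{(i)})$. For the hatted side, every $(k-1)$-subsequence of $\tau$ arises by deleting a single entry $\tau_j$, and re-inserting $\tau_j$ recovers $\tau$ itself, which trivially matches $\tau$; so $\tau \in \s_k(\hat\tau_{(i)})$. For the barred side, consider the subsequence $s$ obtained by deleting $\tau_{i-1}$: the same rank bookkeeping as above shows $red(\tau \setminus \tau_{i-1}) = red(\tau \setminus \tau_i)$ whenever $|\tau_i - \tau_{i-1}| = 1$, so $s$ legitimately matches $\tau \setminus \tau_i$. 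Its $(i-1)$-th and $i$-th entries sit at $\pi$-positions $i$ and $i+1$, which are adjacent, so no value can be inserted strictly between them; the canonical extension is blocked and $\tau \notin \s_k(\bar\tau_{(i)})$.

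The main obstacle is the rank bookkeeping in the $\Leftarrow$ direction: one must correctly relate $\bar\tau_j$ to $\tau_j$ under the two sub-cases $\tau_j \lessgtr \tau_i$, and then recognize that the single constraint at position $i$ of the extended sequence already forces the adjacency condition, so one avoids having to juggle simultaneous constraints at every other position. Once this is in place the $\Rightarrow$ direction is easy given the witness $\pi = \tau$; the boundary cases $i \in \{1, k\}$ simply discard the non-existent adjacency condition since the corresponding range of $l$'s is empty.
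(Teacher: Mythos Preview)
Your argument is correct and follows essentially the same route as the paper: for the $\Leftarrow$ direction both proofs track the rank of the entry that lands in position $i$ of the extended $k$-sequence and derive $|\tau_i-\tau_{i\pm1}|=1$ from that single constraint; for the $\Rightarrow$ direction both exhibit a length-$k$ witness, though you use $\pi=\tau$ itself while the paper uses the permutation obtained from $\tau$ by transposing positions $i-1$ and $i$ (both work for the same reason, namely $red(\tau\setminus\tau_{i-1})=red(\tau\setminus\tau_i)$ when $|\tau_i-\tau_{i-1}|=1$). One small quibble: your appeal to Lemma~\ref{L:hat2} for the symmetry reduction only addresses the hatted classes; you are tacitly also using the analogous (and equally easy) invariance of barred-pattern avoidance under reverse and complement.
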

\begin{proof}
If $\tau_{i-1}=\tau_{i}-1$ or $\tau_{i-1}=\tau_i+1$, then $\tau_1\dots\tau_i\tau_{i-1}\tau_{i+1}\dots\tau_k$ avoids $\hat\tau_{(i)}$ but not avoids $\bar\tau_{(i)}$ and so $S_k(\hat\tau_{(i)})\supsetneq S_k(\bar\tau_{(i)})$. Argument similarly for $\tau_{i+1}$ and the inference direction is proved.  

Conversely, we prove that for each $\pi\in S_n(\hat\tau_{(i)})$ then $\pi$ also avoids $\bar\tau_{(i)}$. Take $A=\pi_{t_1}\pi_{t_2}\dots \pi_{t_{k-1}}$ being an arbitrary subsequence of $\pi$ such that $red(A)=red(\tau\backslash\tau_i)$. Assume that $A$ is expanded into a subsequence $B$ such that $red(B)=\tau$ by adding one another element of $\pi$ before $\pi_{t_{i-1}}$. So the element $\pi_{t_{i-1}}$ which plays the role as the element $\tau_{i-1}$ in $A$ now will play the role as the element $\tau_i$ in $B$. Hence, $\pi_{t_{i-1}}$ is greater than $\tau_i-1$ elements in $B$ and so it is greater than $\tau_i-2$ elements in $A$. Taking the reduction we get $\tau_{i-1}>\tau_i-2$. Furthermore, $\pi_{t_{i-1}}$ can not be the element greater than $(\tau_{i}+1)$ other elements in $B$ and so it can not the element greater than $\tau_{i}$ other elements in $A$. Taking the reduction we get $\tau_{i-1}<\tau_i+2$. Hence, $\tau_{i-1}$ is equal to either $\tau_i-1$ or $\tau_{i}+1$ which is a contradiction to the hypothesis. Similarly, $A$ can not be expanded into a subsequence of reduction $\tau$ by adding one another element of $\pi$ after $\pi_{t_{i}}$ , otherwise $\tau_{i+1}$ is equal to either $\tau_i-1$ or $\tau_i+1$. Therefore, $A$ is only expanded into pattern $\tau$ by barred type  and hence $\pi$ avoids $\bar\tau_{(i)}$. 
\end{proof}
\begin{proposition}\label{P:fac}
Let $\tau\in \s_k$ and $1\leq i\leq k$ such that $\tau_i$ is equal to one of four values: $\tau_{i-1}-1, \tau_{i-1}+1, \tau_{i+1}-1, \tau_{i+1}+1$, \emph{i.e.} $S_n(\bar\tau_{(i)})\subsetneq S_n(\hat\tau_{(i)})$. Then the growth rate of $|S_n(\hat\tau_{(i)})|$ is factorial. 
\end{proposition}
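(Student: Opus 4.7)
The plan is to produce a factorial-size subfamily of $S_n(\hat\tau_{(i)})$ by exhibiting an injection $S_{n-c}\hookrightarrow S_n(\hat\tau_{(i)})$ for some constant $c$ depending only on $k=|\tau|$. This immediately yields $|S_n(\hat\tau_{(i)})|\ge(n-c)!$, which grows faster than any exponential function of $n$, as required.

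First I would cut the four-way hypothesis down to one. Applying Lemma~\ref{L:hat2} with $\chi$ equal to reverse, complement, or inverse identifies the conditions $\tau_i=\tau_{i-1}\pm1$ and $\tau_i=\tau_{i+1}\pm1$ pairwise, up to cardinality-preserving bijections on the hatted classes. So it suffices to treat the single canonical case $\tau_i=\tau_{i-1}+1$ with $i\ge2$, in which positions $i-1$ and $i$ of $\tau$ carry two consecutive values in ascending order. The key structural feature of this case, already used implicitly in the proof of Proposition~\ref{P:equ}, is a \emph{role-switch} hatted extension: given any copy $\pi_{t_1}\cdots\pi_{t_{k-1}}$ of $\tau'=\tau\setminus\tau_i$ in $\pi$, one can extend it to a copy of $\tau$ by inserting an extra letter $\pi_{t_*}$ with $t_{i-2}<t_*<t_{i-1}$ whose value lies just below $\pi_{t_{i-1}}$; in the augmented $k$-tuple the new letter $\pi_{t_*}$ takes the role of $\tau_{i-1}$ and the old $\pi_{t_{i-1}}$ is promoted to $\tau_i$. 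The decisive point is that because $\tau_i-\tau_{i-1}=1$ there is no extra value constraint on $\pi_{t_*}$ beyond lying in the appropriate interval—exactly the extension option absent in the barred definition.

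Second I would define the injection. Fix a gadget $w$ of length $c=k$ on the top $c$ values of $\pi_\sigma$, chosen so that $w$ itself realises a copy of $\tau$. For each $\sigma\in S_{n-c}$, let $\pi_\sigma=w\cdot\sigma^*$, where $\sigma^*$ is $\sigma$ placed on the remaining $n-c$ positions and values. Three classes of $\tau'$-copies in $\pi_\sigma$ must then admit hatted extensions: (a) copies sitting entirely inside $w$, which extend by the ``missing'' letter of the $\tau$-copy embedded in $w$; (b) copies sitting entirely inside $\sigma^*$, which extend via the role-switch of the previous step with $\pi_{t_*}$ drawn from $w$; and (c) mixed copies, in which some $t_j$ lie in $w$ and others in $\sigma^*$, which are handled by the same role-switch using a letter of $w$.

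The main obstacle will be the mixed case (c), where the inserted $\pi_{t_*}$ must meet a value constraint (just below $\pi_{t_{i-1}}$, with no other $\pi_{t_j}$ in between) and a position constraint ($t_{i-2}<t_*<t_{i-1}$) that depend on how the $\tau'$-copy is split between $w$ and $\sigma^*$. The gadget $w$ therefore has to be designed so that its letters cover a rich enough range of values in the right positions to always supply such an element; a two-letter suffix already suffices for $\tau=21$, but in general a length-$k$ gadget $w$ embedding a full copy of $\tau$ on the extremal values appears necessary, and the verification requires a finite case analysis on the distribution of the $t_j$ between $w$ and $\sigma^*$. Once this verification is complete, the map $\sigma\mapsto\pi_\sigma$ is manifestly injective, yielding $|S_n(\hat\tau_{(i)})|\ge(n-c)!$ and hence factorial growth.
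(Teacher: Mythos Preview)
Your reduction to the single case $\tau_i=\tau_{i-1}+1$ via Lemma~\ref{L:hat2} is fine, and the role-switch extension you isolate is indeed the mechanism that separates hatted from barred avoidance. The gap is in the injection itself: the construction $\pi_\sigma=w\cdot\sigma^*$, with $w$ a copy of $\tau$ on the top $k$ values, does \emph{not} land in $S_n(\hat\tau_{(i)})$ for arbitrary $\sigma$.

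Already for the smallest instance $\tau=12$, $i=2$ (so $\tau_i=\tau_{i-1}+1$), $k=2$, your recipe gives $w=(n{-}1)\,n$; taking $n=3$ and $\sigma=1\in S_1$ yields $\pi_\sigma=231$. The singleton $1$ at position $3$ is a copy of $\tau'=red(\tau\setminus\tau_2)=1$ lying entirely in $\sigma^*$---your case (b)---but it is contained in no copy of $12$ inside $231$, so $231\notin S_3(1\hat2)$. The phenomenon is not a $k=2$ artefact: with $\tau=123$, $i=2$ and $\sigma=12\in S_2$ one gets $\pi_\sigma=34512$, and the copy $12$ of $\tau'$ at positions $4,5$ extends to no $123$.

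The obstruction is structural rather than a matter of tuning the gadget. In your role-switch the inserted letter $\pi_{t_*}$ must have value \emph{below} $\pi_{t_{i-1}}$ and position strictly between $t_{i-2}$ and $t_{i-1}$. When the $\tau'$-copy sits entirely in $\sigma^*$, every letter of $w$ has value $>n-k\ge\pi_{t_{i-1}}$, so the value constraint already rules $w$ out; and for $i>2$ the position constraint fails as well, since $t_{i-2}$ lies to the right of all of $w$. A fixed prefix cannot supply the ``nearby'' value required at an arbitrary location inside $\sigma^*$, so case~(b) collapses and with it the claimed injection.

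The paper sidesteps this by \emph{inflating} instead of prefixing: each entry of an arbitrary $\sigma\in S_n$ is replaced by an adjacent run of $k$ consecutive integers, giving an injection $S_n\hookrightarrow S_{nk}(\hat\tau_{(i)})$ and hence $|S_m(\hat\tau_{(i)})|\ge\lfloor m/k\rfloor!$. The crucial difference is that now \emph{every} value in the image carries $k-1$ immediate neighbours in both position and value, so whichever element of a $\tau'$-copy plays the role adjacent to the hatted letter, the pigeonhole principle guarantees an unused block-mate available locally for the role-switch. That localisation is exactly what your prefix gadget lacks.
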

\begin{proof}

By Proposition \ref{P:equ}, it is sufficient to prove for the case $\tau_i=\tau_{i+1}+1$. We construct a mapping $f:\s_n\to \s_{nk}$ as follows. Each $\pi=\pi_1\pi_2\cdots \pi_n\in \s_n$ will map to $f(\pi)$ which is obtained from $\pi$ by replacing each element $\pi_i$ of $\pi$ with an increasing sequence of $k$ consecutive integers $\pi_ik-k+1,\pi_ik-k+2,\dots,\pi_ik$. For instance, $f(312)=789123456$. It is clear that $f$ is injective. We prove that $f(\pi)$ avoids $\hat\tau_{(i)}$ for all $\pi\in \s_n$. So that $|\s_{nk}(\hat\tau_{(i)})|\geq |\s_n|$ and $|\s_{n}(\hat\tau_{(i)})|\geq \left[\frac{n}{k}\right]!$ which is of factorial form as we 
desire. Thus, take a subsequence of $f(\pi)$ which has reduction $red(\tau\backslash \tau_i)$. Then this subsequence has $k-1$ elements and the element playing the role of $\tau_{i+1}$ in it will be in a sequence of $k$ adjacent consecutive integers in $f(\pi)$. By the pigeonhole principle, there is at least 1 element in this sequence of $k$ adjacent consecutive integers not in the subsequence of reduction $red(\tau\backslash \tau_i)$ above. We choose among them the element nearest (in $f(\pi)$) to the element playing the role of $\tau_{i+1}$ above. The subsequence inserted this element forms a subsequence of pattern $\tau$ in $f(\pi)$. So that $f(\pi)$ avoids $\hat\tau_{(i)}$.  
\end{proof}
Notice that Lemma 2 in \cite{Pud10} saying that the cases where barred pattern is proper contained in hatted pattern, the barred pattern becomes normal pattern and so that Wilf-Stanley's conjecture holds for barred pattern but by Propositions \ref{P:equ}, \ref{P:fac} it fails for hatted pattern. It suggests us to make the following conjecture
\begin{conjecture} The growth rate of $|S_n(\hat\tau_{(i)})|$ is factorial for all hatted pattern $\hat\tau_{(i)}$. 
\end{conjecture}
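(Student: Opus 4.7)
The plan is to extend the block-replacement construction of Proposition~\ref{P:fac} beyond the single case $\tau_i=\tau_{i+1}+1$ to every hatted pattern $\hat\tau_{(i)}$. First, I would invoke Lemma~\ref{L:hat2} to reduce the work: under reverse, complement, and inverse, the four ``active'' adjacency conditions on $\tau_i$ (namely $\tau_i\in\{\tau_{i-1}\pm1,\tau_{i+1}\pm1\}$) are interchangeable. So it suffices, for each representative adjacency type, to build an injection $f:\s_n\hookrightarrow\s_{nk}$ whose image lies in $\s_{nk}(\hat\tau_{(i)})$, and conclude $|\s_N(\hat\tau_{(i)})|\geq \lfloor N/k\rfloor!$ exactly as in the original argument.

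For a given adjacency type, the appropriate substitute for a letter $\pi_j$ is the unique arrangement of $k$ consecutive integers that ``fits beside'' the forbidden role. For example, when $\tau_i=\tau_{i+1}-1$ one keeps increasing blocks but places the insertable element just after the entry playing $\tau_{i+1}$ rather than before; when $\tau_i=\tau_{i-1}\pm1$ one substitutes a decreasing block so that the extension element lies just before the entry playing $\tau_{i-1}$. The verification is identical in spirit to the pigeonhole step in Proposition~\ref{P:fac}: any $(k-1)$-subsequence of $f(\pi)$ with reduction $red(\tau\setminus\tau_i)$ must assign one of the roles $\tau_{i\pm1}$ to some entry inside a $k$-block, and at least one neighbouring entry of that block is unused, yielding a subsequence of reduction $\tau$ and hence $f(\pi)\in\s_{nk}(\hat\tau_{(i)})$.

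The main obstacle lies in the remaining case: patterns for which $\tau_i$ is not value-adjacent to either neighbour. By Proposition~\ref{P:equ} one has $\s_n(\hat\tau_{(i)})=\s_n(\bar\tau_{(i)})$ here, and the paragraph preceding the conjecture notes that in this regime Pudwell's Lemma~2 collapses the barred pattern to an ordinary pattern, so Marcus--Tardos gives only an exponential upper bound. Reconciling this with a factorial lower bound therefore appears to require either restricting the conjecture to ``properly hatted'' patterns (where the inclusion $\s_n(\bar\tau_{(i)})\subsetneq\s_n(\hat\tau_{(i)})$ is strict), or producing a genuinely new construction that exploits the hatted formalism even when the hat is inactive; the latter is where I expect the deepest work to lie.
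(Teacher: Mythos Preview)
The statement you are addressing is a \emph{conjecture}; the paper offers no proof, so there is nothing to compare your attempt against. What you have written is not a proof either: you explicitly end by saying the ``deepest work'' remains to be done.

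That said, two points in your write-up deserve correction.

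First, your proposed extension of Proposition~\ref{P:fac} to the other three adjacency cases is redundant. The paper's own proof of Proposition~\ref{P:fac} already opens with ``By Proposition~\ref{P:equ}, it is sufficient to prove for the case $\tau_i=\tau_{i+1}+1$'', which is precisely the symmetry reduction via Lemma~\ref{L:hat2} you outline. So the adjacency regime is entirely settled by Proposition~\ref{P:fac} as stated; you are not adding anything there.

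Second, and more seriously, you have misread the paragraph preceding the conjecture. The paper invokes Pudwell's Lemma~2 for the cases where the barred class is \emph{properly contained} in the hatted class---that is, the \emph{adjacency} cases, not the non-adjacency ones. In those cases the barred pattern collapses to an ordinary pattern and Marcus--Tardos bounds the \emph{barred} class exponentially, while Proposition~\ref{P:fac} shows the \emph{hatted} class grows factorially; there is no contradiction because the two classes differ. In the non-adjacency regime, where $\s_n(\hat\tau_{(i)})=\s_n(\bar\tau_{(i)})$, Pudwell's lemma says nothing of the sort, and there is no Marcus--Tardos ceiling to ``reconcile'' with. (A quick sanity check: for $\hat312$, which is non-adjacent, one has $|\s_n(\hat312)|=(n-1)!$ since a permutation avoids $\hat312$ iff its first entry is $n$.) The genuine open content of the conjecture is therefore exactly the non-adjacency case, but the obstacle is not the one you describe: it is simply that no general factorial lower bound for single barred patterns is known, and the block-substitution trick of Proposition~\ref{P:fac} relies essentially on the value-adjacency of $\tau_i$ to a neighbour, so a different construction is needed.
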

\section{Hatted patterns visiting Dyck paths}\label{S:3}
In this section, we study two subclasses of $\s_n(132)$: $\s_n(132,(p-1)(p-2)\dots 2\hat 1p)$ and $\s_n(132,12\dots \hat p)$. We prove in Section \ref{S:32} that the standard bijection \cite{Knu69} between $S_n(132)$ and Dyck $n$-paths restricted on $\s_n(132,12\dots \hat p)$ is bijective to Dyck paths with no peaks at height $p$. This proof uses the non-recursive description version of the standard bijection given by Krattenthaler \cite{Kra01}. Furthermore, in Section \ref{S:31}, we propose a modification of the standard bijection and prove that its restriction on $\s_n(132,(p-1)(p-2)\dots 21p)$ is bijective to Dyck $n$-paths with no $udd\dots d u$. Last we present the Wilf equivalence for these two classes of permutations by Simion-Schmidt's bijection. 
\subsection{Dyck paths with no peaks at height $p$.} \label{S:32} 
%In this section, we prove that the standard bijection restricted on $\s_n(132,\hat 12\dots  (p+1)$ is also bijective to Dyck $n$-paths with no peaks at height $p$ by using the non-recursive description of the standard bijection given by Krattenthaler \cite{Kra01}. We first recall a representation of permutations into left-to-right (LTR for short) minimal blocks. 
We first recall some preliminary definitions 
\begin{definition} Let $n$ be a positive integer. 
\begin{itemize}
\item[(i)] A \emph{Dyck $n$-path} is a lattice path in the integer plane starting at $(0,0)$ and ending at $(0,2n)$ which consists of $n$ \emph{up-steps} $(1,1)$, $n$ \emph{down-steps} $(1,-1)$ and never runs bellow $x$-axis. 
\item[(ii)] A \emph{peak} of a Dyck path is the point created by an up-step followed by a down-step. The height of the peak is the $y$-coordinate of the point.
\item[(iii)] A \emph{valley} of a Dyck path is the point created by a down-step followed by an up-step.
\end{itemize}
\end{definition}
Each Dyck $n$-path is also represented by a word of length $2n$ on the alphabet $\{u,d\}$ , where $u$ and $d$ substitute for up and down step respectively. For example, the Dyck $6$-path in Figure \ref{F:dyck4} is represented by $uuududduududdd$. %It is well-known that the $n$th Catalan number counts the Dyck paths of semi-length $n$ and it also counts the permutations of length $n$ avoiding $132$. There are several bijections between these two sets of same cardinality. One of these bijections is \emph{standard bijection} given by Knuth \cite{Knu63, Knu??} as follows. 

%It is well known that the number of $132$-avoiding permutations of length $n$ is equal to the $n$th Catalan number. Krattenthaler in \cite{Kra01} gives an explicit bijection from this set to the set of Dyck $n$-paths. In this section we first represent this bijection, an important key for us to prove our results which are also related to Catalan number. Precisely, we prove that the number of permutations avoiding both pattern $132$ and hatted pattern $12\hat{3}$ is equal to Catalan number but at the level less than one.

A permutation is represented into left-to-right minimal blocks. For abbreviation, we write $LTR$ (resp. $LTRM$) instead of left-to-right (resp. left-to-right minimal). Let $\pi= \seq{\pi} n \in \s_n$. An element $\pi_i$ of $\pi$ is called a $LTR$ minimum of $\pi$ if $\pi_i<\pi_j$ for all $j<i$ and in this case $i$ is called a $LTRM$ index of $\pi$. Assume that $1=i_1<i_2<\cdots<i_k$ are all  $LTRM$ indices of $\pi$. Then $\pi$ is uniquely represented  into $k$ blocks. Each one is called a $LTRM$ block starting by an LTRM index. $\pi$ can be separated by parentheses as follows: 
$$\pi=(\pi_{i_1}\dots\pi_{i_2-1})(\pi_{i_2}\dots\pi_{i_3-1})\cdots(\pi_{i_k}\dots\pi_n).$$
For instance, the elements $5,4,2,1$ are $LTR$ minima of $\pi=(5)(46)(2)(137)$ with $LTRM$-blocks $\{5,46,2,137\}$.

Particularly, when $\pi$ is $132$-avoiding, we have the following lemma:
\begin{lemma}[\cite{Bon06}]\label{L:ltrm}
Let $\pi\in \s_n(132)$ with $LTRM$ indices $1=i_1<i_2<\cdots<i_k$. Then, the $LTRM$-blocks of $\pi$ satisfy the following conditions
\begin{itemize}
\item[i)] the first elements of the blocks are decreasing from left to right.%, it is $\pi_{i_1}>\pi_{i_2}>\cdots>\pi_{i_k}$;
\item[ii)] each block is an increasing sequence.%, it is $\pi_{i_t}<\pi_{i_t+1}<\cdots<\pi_{i_{t+1}-1}$ for all $t=1,2,\dots,k$ (by convention $i_{k+1}=n+1$).
\end{itemize}
\end{lemma}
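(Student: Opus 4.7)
The plan is to prove the two claims separately, with (i) following directly from the definition of left-to-right minimum and (ii) being the only place where the $132$-avoidance hypothesis is needed.

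For part (i), I would simply note that by definition, $\pi_{i_j}$ is an LTR minimum, meaning $\pi_{i_j} < \pi_\ell$ for every $\ell < i_j$. Since $i_1 < i_2 < \cdots < i_k$ and in particular $i_j < i_{j+1}$, we obtain $\pi_{i_{j+1}} < \pi_{i_j}$ for every $j$, so the first elements of consecutive blocks form a strictly decreasing sequence. This step uses nothing beyond the definition.

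For part (ii), I would argue by contradiction. Fix a block starting at $i_j$ and running up to position $i_{j+1}-1$, and suppose it contained a descent, that is, two indices $s,t$ with $i_j \le s < t < i_{j+1}$ and $\pi_s > \pi_t$. The key observation is that because $\pi_{i_j}$ is the latest LTR minimum strictly before position $i_{j+1}$, every index $\ell$ with $i_j < \ell < i_{j+1}$ fails to be an LTR minimum, which forces $\pi_\ell > \pi_{i_j}$ (otherwise $\pi_\ell$ would itself be an LTR minimum). In particular both $\pi_s$ and $\pi_t$ are larger than $\pi_{i_j}$. Combined with the assumed inequality $\pi_t < \pi_s$, this gives the chain $\pi_{i_j} < \pi_t < \pi_s$ at positions $i_j < s < t$, which is precisely an occurrence of the pattern $132$, contradicting $\pi \in S_n(132)$. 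Hence no such descent exists and each block is increasing.

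The only slightly subtle point, and the one I would be careful to spell out, is the intermediate claim that every position strictly between two consecutive LTRM indices carries a value greater than the LTR minimum opening its block; everything else is a direct unpacking of definitions. The structural consequence I would then implicitly rely on in later sections is that a $132$-avoiding permutation is completely determined by its sequence of LTR minima together with the increasing arrangement of the remaining entries inside each block, which is exactly the feature exploited by the standard bijection of Knuth and its non-recursive description by Krattenthaler.
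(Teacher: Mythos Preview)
Your proof is correct. The paper does not supply its own proof of this lemma; it is stated as a known result with a citation to \cite{Bon06}, so there is no approach in the paper to compare yours against. One tiny wording issue: when you write ``both $\pi_s$ and $\pi_t$ are larger than $\pi_{i_j}$'' you are implicitly assuming $s>i_j$; the boundary case $s=i_j$ is handled immediately (since then $\pi_t>\pi_{i_j}=\pi_s$ already contradicts $\pi_s>\pi_t$), but it would be cleaner to say so explicitly.
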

We now present the non-recursive version of the standard bijection between $S_n(132)$ and Dyck $n$-paths given by Krattenthaler \cite{Kra01}. Denote this bijection by $\phi$. The recursive version will be mentioned in Section \ref{S:31}. Let $\pi\in \s_n(132)$. To generate a Dyck path from $\pi$ we read each element of $\pi$ from left to right as follows.
The path starts from $(0,0)$. When $\pi_i$ is read, 
\begin{itemize} 
\item we add up-steps to the path up-steps until it hits the height $h_i+1$,  where $h_i =|\{\pi_k: \pi_k>\pi_i \text{ and } k>i\}|$ is the number of elements after $\pi_i$ in  $\pi$ and greater than $\pi_i$;
\item we then add one down-step to the path. 
%as many as necessary right followed by a down-step such that this down-step is from height $h_i+1$ to height $h_i$, where . 
\end{itemize}
%Let $\pi\in \s_n(132)$, the Dyck path $D_n=\phi(\pi)$ is determined as follows: it starts from $(0,0)$ (and will end at $(2n,0)$). We read sequentially $LTRM$-blocks of $\pi$ from left to right. And when block $(\pi_{i_t}\dots\pi_{i_{t+1}-1})$ is read we add to the path a path of $(\pi_{i_{t-1}}-\pi_{i_t})$ (by convention $\pi_{0}=n+1$) consecutive up-steps followed by $(i_{t+1}-i_t)$ consecutive down-steps. At the end we get a path which is a Dyck path from $(0,0)$ to $(2n,0)$. 
\begin{figure}[h]
\centering
\includegraphics[width=8cm]{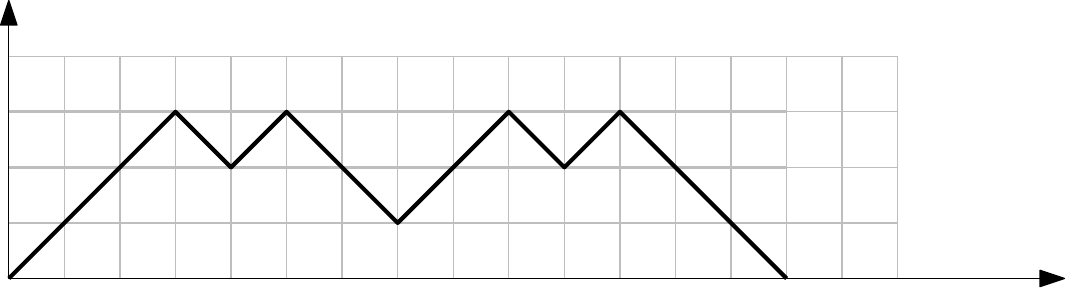}
\caption{The Dyck path with no peaks at height $2$ which is the image of $5462137\in\s_7(132, \hat 123)$ by $\phi$}
\label{F:dyck4}
\end{figure}

For example, $\phi$ will map $\pi=5462137$ to the Dyck $7$-path showed in Figure \ref{F:dyck4}. It is noticeable that $\pi\in \s_n(132, \hat 123)$ and $\phi(\pi)$ has no peaks at height $2$. We have the followings:
%This observation, when looked at from a more general point of view, leads to Theorem \ref{T:cat} bellow. 
\begin{lemma}\label{L:sta}
Let $\pi=\pi_1\dots\pi_n\in \s_n(132)$ and $D_n=\phi(\pi)$. Then, reading each $LTR$ minimum in $\pi$ corresponds to a peak in $D_n$ by the above construction. Furthermore, the height of the peak in $D_n$ corresponding to reading $\pi_i$ of $\pi$ is $h_i+1$.
\end{lemma}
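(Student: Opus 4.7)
The plan is to track two quantities through the construction of $\phi$: the current height $c_i$ of the path just before reading $\pi_i$, and the status of $\pi_i$ relative to $\pi_{i-1}$. From the construction, $c_1 = 0$ and, for $i\geq 2$, processing $\pi_{i-1}$ ends with a single down-step after reaching height $h_{i-1}+1$, so $c_i = h_{i-1}$. Reading $\pi_i$ appends a (possibly empty) run of up-steps up to height $h_i+1$ followed by exactly one down-step, so a peak is created by reading $\pi_i$ precisely when at least one up-step is added, i.e.\ when $h_i+1>c_i$. For $i=1$ this is automatic and $\pi_1$ is always an $LTR$ minimum, handling the base case; for $i\geq 2$ the peak condition reduces to $h_i\geq h_{i-1}$.

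The heart of the proof will therefore be the equivalence, for $i\geq 2$ and $\pi\in\s_n(132)$, between ``$\pi_i$ is an $LTR$ minimum'' and ``$h_i\geq h_{i-1}$''. I would first replace the $LTR$-minimum condition by the simpler condition $\pi_i<\pi_{i-1}$: this is exactly where $132$-avoidance enters, because if $\pi_j<\pi_i$ for some $j<i-1$ while $\pi_i<\pi_{i-1}$, then $\pi_j\pi_{i-1}\pi_i$ would form a $132$ pattern. After that, I would split into the cases $\pi_i<\pi_{i-1}$ and $\pi_i>\pi_{i-1}$ and compare the sets $\{k>i-1:\pi_k>\pi_{i-1}\}$ and $\{k>i:\pi_k>\pi_i\}$ directly. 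In the first case the former equals $\{k>i:\pi_k>\pi_{i-1}\}$ and is contained in the latter, yielding $h_{i-1}\leq h_i$; in the second case the index $i$ itself lies in the former but not in the latter, and the latter is contained in $\{k>i:\pi_k>\pi_{i-1}\}$, forcing $h_i\leq h_{i-1}-1$.

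Finally, the height statement comes for free: when reading $\pi_i$ does create a peak, the tip of that peak is the maximum height reached during the processing of $\pi_i$, which by the very definition of the construction is $h_i+1$. The main obstacle I expect is the equivalence in the second paragraph -- not hard, but it is the only place where $132$-avoidance plays a genuine role, while the rest of the argument is essentially bookkeeping on the shape of the path built by $\phi$.
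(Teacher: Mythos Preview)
Your proposal is correct and follows essentially the same route as the paper: reduce ``$\pi_i$ is an $LTR$ minimum'' to ``$\pi_i<\pi_{i-1}$'' (the paper invokes Lemma~\ref{L:ltrm} for this, you argue it directly via $132$-avoidance), then observe that a peak occurs at step $i$ exactly when $h_i\geq h_{i-1}$, with the peak height equal to $h_i+1$ by construction. The only difference is that the paper asserts the equivalence $\pi_i<\pi_{i-1}\Leftrightarrow h_i\geq h_{i-1}$ with a bare ``Hence'', whereas you actually supply the set-containment argument in both directions; your version is more complete but not a different approach.
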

\begin{proof}
By Lemma \ref{L:ltrm}, $\pi_i$ is a $LTR$ minimum if and only if $\pi_i<\pi_{i-1}$. Hence, $\pi_i$ is a $LTR$ minimum if and only if $h_i\geq h_{i-1}$. Therefore, when $\pi_i$ is read, the path must go up at least one step from height $h_{i-1}$ before going down one step from $h_i+1$ to $h_i$, this creates a peak in $D_n$. By the description of $\phi$, the height of this peak is $h_i+1$.

%Assume that $\pi_i$ is a LTR minimum. We have $\pi_{i-1}>\pi_i$ and so the number of elements of $\pi$ after $\pi_i$ greater than $\pi_i$ is not less than that of elements of $\pi$ after $\pi_{i-1}$ greater than $\pi_{i-1}$. By the non-recursive description of $\phi$, when $\pi_i$ is read, it is necessary to add at least one up-step before going down. So reading $\pi_i$ creates a peak in $\phi(\pi)$. Conversely, if $\pi_i$ is not a LTR minimum, then it must be contained to the same $LTRM$-block as $\pi_{i-1}$  and so $\pi_i>\pi_{i-1}$ (by Lemma \ref{L:ltrm}). Hence, the number of elements of $\pi$ after $\pi_i$ greater than $\pi_i$ is less exactly 1 than that of elements of $\pi$ after $\pi_{i-1}$ greater than $\pi_{i-1}$. Therefore, when $\pi_i$ is read, there is no any necessary up step added before followed by a down step which is a contradiction. The rest is straighforward from the above description of $\phi$.
\end{proof}
\begin{theorem}\label{T:cat}
Let $n,p$ be positive integers such that $1\leq p\leq n$. The map $\phi$ restricted on $\s_n(132, \hat 12\dots {(p+1)})$ is bijective to Dyck $n$-paths with no peak at height $p$.
\end{theorem}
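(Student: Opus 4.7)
The plan is to exploit the fact that $\phi$ is already a bijection from $\s_n(132)$ to Dyck $n$-paths, so the theorem reduces to showing, for $\pi\in\s_n(132)$, that $\pi$ avoids $\hat 12\dots(p+1)$ if and only if $\phi(\pi)$ has no peak at height $p$. By Lemma~\ref{L:sta}, peaks at height $p$ in $\phi(\pi)$ correspond bijectively to LTR-minima $\pi_i$ with $h_i=p-1$, so the target reformulation is: $\pi$ contains $\hat 12\dots(p+1)$ (\emph{i.e.}\ some length-$p$ increasing subsequence of $\pi$ cannot be extended to length $p+1$) if and only if there exists an LTR-minimum $\pi_i$ having exactly $p-1$ larger elements after position $i$.

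For the forward direction, suppose $\pi_i$ is such an LTR-minimum. By Lemma~\ref{L:ltrm} and the $132$-avoidance of $\pi$, the $p-1$ elements greater than $\pi_i$ lying to the right of position $i$ already appear in increasing order, so together with $\pi_i$ they form an increasing subsequence $a_1<a_2<\dots<a_p$ at positions $b_1<\dots<b_p$. I would argue that this subsequence is non-extendable: an insertion before $a_1$ is impossible because $a_1=\pi_i$ is an LTR-minimum; an insertion between some $a_s$ and $a_{s+1}$ or after $a_p$ is impossible because any candidate element $\pi_{j_*}$ would satisfy $\pi_{j_*}>\pi_i$ and $j_*>i$, and hence would already be one of $a_2,\dots,a_p$ by the very definition of $h_i$. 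This non-extendable subsequence witnesses that $\pi$ contains $\hat 12\dots(p+1)$.

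For the converse, take any non-extendable increasing subsequence $a_1<\dots<a_p$ at positions $b_1<\dots<b_p$. The impossibility of extending at the front forces $a_1$ to be an LTR-minimum, since any smaller element before $b_1$ would otherwise yield a length-$(p+1)$ extension. It remains to show $h_{b_1}=p-1$. Clearly $h_{b_1}\geq p-1$. Assuming $h_{b_1}\geq p$ for contradiction, $132$-avoidance forces the elements greater than $a_1$ to the right of $b_1$ to form an increasing sequence $c_1<\dots<c_{h_{b_1}}$ (in both value and position), which contains $\{a_2,\dots,a_p\}$ as a proper subset. Any $c_j$ outside this subset can then be inserted into $(a_1,\dots,a_p)$ at its correct position to produce an increasing subsequence of length $p+1$, contradicting non-extendability. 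Hence $h_{b_1}=p-1$, and Lemma~\ref{L:sta} yields the desired peak at height $p$.

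The main obstacle will be the middle-insertion case of the forward direction: one must carefully argue, using $132$-avoidance, that any value strictly between $a_s$ and $a_{s+1}$ (or strictly greater than $a_p$) appearing to the right of $\pi_i$ must already belong to the canonical increasing sequence of elements exceeding $\pi_i$, so that no new insertion candidate exists. Once this bookkeeping is in place, the remainder amounts to a counting step relating $h_{b_1}$ to the length of a maximal increasing subsequence passing through $a_1$.
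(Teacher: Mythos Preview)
Your proposal is correct and follows essentially the same route as the paper: both proofs reduce the statement, via Lemma~\ref{L:sta}, to the equivalence ``$\pi$ contains $\hat 12\dots(p+1)$ $\Leftrightarrow$ some LTR-minimum $\pi_i$ has $h_i=p-1$,'' and both verify the two implications using $132$-avoidance to force the elements greater than $\pi_i$ to its right into increasing order. Your converse direction is in fact spelled out more carefully than the paper's (you explicitly build the sequence $c_1<\dots<c_{h_{b_1}}$ and name the extending element), and the ``main obstacle'' you flag about middle insertions is already resolved by your own observation that any such candidate must lie among $a_2,\dots,a_p$.
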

\begin{proof}
Let $\pi\in\s_n(132, \hat 12\dots (p+1))$, and let $D_n=\phi(\pi)$. On the contrary, supposing that $D_n$ has a peak at height $p$. By Lemma \ref{L:sta} this peak corresponds to $\pi_{i_0}$ satisfying
\begin{itemize}
\item[(i)] $\pi_{i_0}$ is a LTR minimum of $\pi$. 
\item[(ii)] there are exactly $(p-1)$ elements of $\pi$ after $\pi_{i_0}$ and greater than $\pi_{i_0}$, says $\pi_{i_1}, \dots, \pi_{i_{p-1}}$ with $i_0<i_1<\cdots<i_{p-1}.$% such that $\pi_{i_0}<\pi_{i_t}$ for all $t=1, 2, \dots, p-1$. 
\end{itemize} 
Since $\pi$ avoids $132$, the subsequence $\pi_{i_0}\dots \pi_{p-1}$ is increasing, otherwise there exists $t$, where $0<t<p-1$, such that $\pi_{i_t}>\pi_{i_{t+1}}$ and $red(\pi_{i_0} \pi_{i_t} \pi_{i_{t+1}}) = 132$. So $red(\pi_{i_0}\pi_{i_1}\dots\pi_{i_{p-1}})=12\dots p$. Furthermore, we can not expand this subsequence into the one whose reduction is $12\dots p(p+1)$ by inserting any element of $\pi$ after $\pi_{i_0}$ (by (ii)) nor before $\pi_{i_0}$ (by (i)). Hence, $\pi$ contains the pattern $\hat 12\dots p(p+1)$ which is a contradiction.   

Conversely, let $D_n$ be a Dyck $n$-path with no peak at height $p$ and let $\phi^{-1}(D_n)=\pi_1\dots\pi_n$. Then $\phi^{-1}(D_n)$ avoids $132$. We prove that $\phi^{-1}(D_n)$ avoids $\hat 12\dots(p+1)$. On the contrary, there exists a subsequence  $\pi_{i_0}\pi_{i_1}\cdots\pi_{i_{p-1}}$ of  $\phi^{-1}(D_n)$, whose reduction is $12\dots p$, which can not be expanded into the pattern $12\dots (p+1)$ in $\phi^{-1}(D_n)$. Then $\pi_{i_0}$ is a $LTR$ minimum (otherwise the element smaller than $\pi_{i_0}$ and before $\pi_{i_0}$ in $\pi$ can be inserted to the subsequence and forms a pattern $12\dots (p+1)$) and there are exactly $(p-1)$ elements of $\phi^{-1}(D_n)$ after $\pi_{i_0}$ and greater than $\pi_{i_0}$ (otherwise the subsequence either can be expanded into pattern $12\dots (p+1)$ or contains pattern $132$). By Lemma \ref{L:sta}, reading $\pi_{i_0}$ creates a peak at height $p$ in $D_n$ which is a contradiction. 
\end{proof}
The Corollaries \ref{C:Fin}, \ref{C:Cat} bellow are immediate from Theorem \ref{T:cat} and the following Propositions \ref{P:Dyc.h1}, \ref{P:Dyc.h2}.
\begin{proposition}[\cite{Deu99}]\label{P:Dyc.h1}
Dyck $n$-paths with no peaks at height $1$ counts the $n$-th Fine number.
\end{proposition}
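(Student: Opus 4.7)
The plan is to establish the identity via a first-return decomposition of Dyck paths, obtain a functional equation for the generating function of Dyck paths with no peak at height $1$, and then recognise the solution as the generating function of the Fine numbers. Let $f_n$ count Dyck $n$-paths with no peak at height $1$, set $F(x)=\sum_{n\ge 0}f_n x^n$, and let $C(x)=\sum_{n\ge 0}C_n x^n$ denote the Catalan generating function, which satisfies $C(x)=1+xC(x)^2$. A peak at height $1$ is precisely an occurrence of $ud$ whose $u$-step leaves the $x$-axis, i.e.\ an isolated hill at ground level.

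First I would write each nonempty Dyck path uniquely in its first-return form $D=u\,P\,d\,Q$, where $P$ is an arbitrary Dyck path (the interior of the first arch, thought of as a Dyck path lifted to height $\ge 1$) and $Q$ is a Dyck path (the part of $D$ after the first return to the axis). Every peak of $P$, when embedded in $D$, lies at height $\ge 2$, so the only way the prefix $uPd$ contributes a peak at height $1$ is for $P$ to be empty. Consequently $D$ avoids peaks at height $1$ if and only if $P$ is a \emph{nonempty} Dyck path and $Q$ itself avoids peaks at height $1$. This decomposition translates into
\[
F(x)=1+x\bigl(C(x)-1\bigr)F(x),\qquad\text{hence}\qquad F(x)=\frac{1}{1-x\bigl(C(x)-1\bigr)}.
\]

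Finally I would verify that this expression is the generating function of the Fine numbers. Using $C(x)-1=xC(x)^2$ from the Catalan equation gives $F(x)=1/(1-x^2 C(x)^2)$. Substituting the closed form $xC(x)=(1-\sqrt{1-4x})/2$ reduces this to
\[
F(x)=\frac{2}{1+2x+\sqrt{1-4x}},
\]
which is a standard closed form for the Fine generating function; extracting coefficients (or checking the first few values $1,0,1,2,6,18,\dots$) identifies $f_n$ with the $n$-th Fine number. The main obstacle is precisely this last, purely algebraic step: the decomposition itself is immediate, but massaging the resulting rational function in $C(x)$ into a recognisable form of the Fine generating function requires a short calculation. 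An alternative route, which I would pursue if the algebraic simplification proved fiddly, is to turn the functional equation directly into a linear recurrence for $f_n$ via $C(x)=\sum C_n x^n$ and compare it with the standard recursion for the Fine numbers appearing in \cite{Deu99}.
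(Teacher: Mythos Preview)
The paper does not supply its own proof of this proposition: it is quoted as a known result from \cite{Deu99} and used as a black box to derive Corollary~\ref{C:Fin}. So there is nothing in the text to compare your argument against beyond the citation.

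That said, your argument is correct. The first-return decomposition $D=u\,P\,d\,Q$ and the observation that the only way the first arch creates a height-$1$ peak is when $P=\epsilon$ (since a nonempty Dyck path necessarily ends in a $d$, so the junction $Pd$ cannot be a peak) are exactly right, and the resulting equation $F(x)=1/(1-x(C(x)-1))$ is standard. Your subsequent simplification to $F(x)=2/(1+2x+\sqrt{1-4x})$ is algebraically sound, and this is indeed one of the familiar closed forms for the Fine generating function; the initial terms $1,0,1,2,6,18,\dots$ you compute confirm the match. This is essentially the same computation Deutsch carries out in the cited reference, so you are not doing anything unorthodox---you have simply reproduced the literature proof that the present paper chose to cite rather than include.
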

\begin{proposition}[\cite{PW01}]\label{P:Dyc.h2}
Dyck $n$-paths with no peaks at height $2$ counts the $(n-1)$-th Catalan number.
\end{proposition}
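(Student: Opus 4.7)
My plan is to identify the generating function $P_2(x) = \sum_{n\geq 0} a_n x^n$, where $a_n$ denotes the number of Dyck $n$-paths with no peak at height $2$, and to show that $P_2(x) = 1 + xC(x)$, with $C(x) = \sum_{n\geq 0} C_n x^n$ the Catalan generating function. Since $xC(x) = \sum_{n\geq 1} C_{n-1}\,x^n$, extracting the coefficient of $x^n$ immediately yields $a_n = C_{n-1}$ for every $n\geq 1$, which is the claim.

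The main tool is the first-return decomposition: every non-empty Dyck path writes uniquely as $u\,P\,d\,Q$, where $P$ and $Q$ are Dyck paths. The mountain $uPd$ elevates every peak of $P$ by one unit, so a peak at height $2$ in $uPdQ$ is either a peak at height $2$ inside $Q$ or a peak of $P$ at height $1$. Consequently, $uPdQ$ has no peak at height $2$ iff $P$ has no peak at height $1$ and $Q$ has no peak at height $2$. Writing $F(x)$ for the generating function of Dyck paths with no peak at height $1$, which is the Fine series by Proposition~\ref{P:Dyc.h1}, this translates to
\[
P_2(x) \;=\; 1 + x\,F(x)\,P_2(x).
\]

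I would next apply the same decomposition to $F(x)$ itself. A peak at height $1$ in $uPdQ$ can only arise as the hill $ud$ (when $P$ is empty) or as a peak at height $1$ inside $Q$, since peaks of $P$ are all elevated to height $\geq 2$. Thus ``no peak at height $1$'' forces $P$ to be a non-empty arbitrary Dyck path and $Q$ to avoid the same pattern, whence $F(x) = 1 + x\bigl(C(x)-1\bigr)F(x)$. Eliminating $F(x)$ between the two functional equations and invoking the Catalan identity $C(x) = 1 + xC(x)^2$, equivalently $C(x)\bigl(1 - xC(x)\bigr) = 1$, a short calculation gives
\[
P_2(x) \;=\; \frac{1}{1 - xF(x)} \;=\; 1 + \frac{x}{1 - xC(x)} \;=\; 1 + xC(x),
\]
as required.

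The only delicate point is the combinatorial bookkeeping in the decomposition: one must verify carefully that enclosing $P$ in $u\ldots d$ shifts each of its peaks upward by exactly one, so that the height-$2$ constraint on $uPdQ$ decouples cleanly into a height-$1$ constraint on $P$ and a height-$2$ constraint on $Q$. Once that is done, the rest is mechanical algebra leaning only on Proposition~\ref{P:Dyc.h1} and the defining equation for $C(x)$. A direct bijection from Dyck $n$-paths with no peak at height $2$ onto unrestricted Dyck $(n-1)$-paths would be an appealing alternative, but it appears to require a subtler matching than the two-equation generating-function argument above.
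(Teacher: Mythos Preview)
The paper does not supply its own proof of this proposition; it is quoted from \cite{PW01} and used only to deduce Corollary~\ref{C:Cat}. Your generating-function argument is correct. The first-return decomposition $uPdQ$ does decouple the height-$2$ constraint exactly as you claim: inside $uPd$ every peak of $P$ is raised by one level and no new peak is created (for nonempty $P$ the outer $u$ and $d$ abut the initial $u$ and final $d$ of $P$, so neither boundary is a peak), while $Q$ sits at ground level with its peak heights unchanged. The resulting system $P_2 = 1 + xF P_2$, $F = 1 + x(C-1)F$, together with the Catalan identity $C(1-xC)=1$, indeed collapses to $P_2 = 1 + xC$, and your algebra is accurate.

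This is in the same spirit as the cited source: Peart and Woan treat the general height-$k$ case via generating functions and continued fractions, so your two-equation derivation is a specialization of their method rather than a departure from it. Regarding your closing remark, short direct bijections to Dyck $(n-1)$-paths do exist in the literature and are not much more delicate than the algebraic route, but since the paper itself is content to cite the result, the generating-function proof you give is entirely adequate here.
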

\begin{corollary}\label{C:Fin} $\s_n(132,\hat 12)$ counts the $n$th Fine number.
\end{corollary}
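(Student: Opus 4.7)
The plan is to derive this directly from Theorem~\ref{T:cat} by specializing the parameter $p$, and then invoke Proposition~\ref{P:Dyc.h1} to identify the resulting count. There is essentially no extra work beyond reading off the definitions; the substantive content already lies in the bijection $\phi$ and the classical enumeration of peakless-at-height-one Dyck paths.

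Concretely, I would first set $p=1$ in Theorem~\ref{T:cat}. The forbidden hatted pattern $\hat 12\dots(p+1)$ then becomes $\hat 12$, so the theorem specializes to the assertion that the restriction of $\phi$ to $\s_n(132,\hat 12)$ is a bijection onto the set of Dyck $n$-paths having no peak at height~$1$. In particular, the two sets have the same cardinality.

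Next, I would apply Proposition~\ref{P:Dyc.h1}, which states that Dyck $n$-paths with no peak at height~$1$ are enumerated by the $n$-th Fine number. Composing this equality of cardinalities with the bijection just obtained yields $|\s_n(132,\hat 12)|$ equal to the $n$-th Fine number, which is exactly the statement of the corollary.

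Since the result is an immediate corollary, I do not expect any real obstacle. The only thing worth double-checking is the degenerate case $p=1$ of Theorem~\ref{T:cat}: the proof of the theorem implicitly handles reducing subsequences of length $p-1=0$ and checks that the corresponding ``LTR minimum with no larger element after it'' in $\pi$ produces a peak at height exactly~$1$ under $\phi$. Both directions of the argument there remain valid when $p=1$, so specializing is legitimate and the corollary follows.
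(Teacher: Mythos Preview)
Your proposal is correct and matches the paper's own reasoning exactly: the paper states that Corollary~\ref{C:Fin} is immediate from Theorem~\ref{T:cat} (with $p=1$) together with Proposition~\ref{P:Dyc.h1}. Your extra check that the $p=1$ case is covered by the theorem's hypotheses is sound, since Theorem~\ref{T:cat} explicitly allows $1\le p\le n$.
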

\begin{corollary}\label{C:Cat} $\s_n(132,\hat 123)$ counts the $(n-1)$-th Catalan number.
\end{corollary}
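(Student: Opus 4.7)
The plan is to derive Corollary \ref{C:Cat} as an immediate specialization of Theorem \ref{T:cat} combined with Proposition \ref{P:Dyc.h2}. No new machinery is needed: the entire content of the corollary is the cardinality identity obtained by chaining the bijection of Theorem \ref{T:cat} with the known enumeration of the intermediate family of Dyck paths.

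Concretely, I would first set $p=2$ in Theorem \ref{T:cat}. The pattern $\hat 12\dots(p+1)$ becomes $\hat 123$, and the theorem asserts that the restriction of the standard bijection $\phi$ gives
\[
\phi : \s_n(132,\hat 123) \;\longrightarrow\; \{\text{Dyck $n$-paths with no peak at height }2\}
\]
as a bijection. This is legitimate since the hypothesis $1 \leq p \leq n$ of Theorem \ref{T:cat} is satisfied as soon as $n \geq 2$ (the small cases $n=1$ are trivial since no pattern of length $3$ can appear, so both sides equal $1$, matching the convention that the $0$th Catalan number is $1$).

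Next, I would invoke Proposition \ref{P:Dyc.h2} from \cite{PW01}, which states that the number of Dyck $n$-paths with no peak at height $2$ is the $(n-1)$-th Catalan number. Combining these two facts yields $|\s_n(132,\hat 123)| = C_{n-1}$, which is exactly the statement of the corollary.

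There is essentially no obstacle: the only thing to verify is that the specialization of the pattern $\hat 12\dots(p+1)$ at $p=2$ is indeed the three-letter pattern $\hat 123$ as written in the corollary, which is immediate. The small edge cases ($n=1,2$) can be checked by hand to confirm the indexing of Catalan numbers agrees with the convention used in Proposition \ref{P:Dyc.h2}.
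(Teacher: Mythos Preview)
Your proposal is correct and matches the paper's own argument exactly: the paper states that Corollary~\ref{C:Cat} is immediate from Theorem~\ref{T:cat} (specialized to $p=2$) together with Proposition~\ref{P:Dyc.h2}. There is nothing to add.
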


In this case, it is interesting that $|\s_n(132,\hat 123)|=|S_{n-1}(132)|$.

\begin{corollary} 
When $p>n$, $\s_n(132,\hat 123\ldots (p+1))=S_{n}(132)$.
\end{corollary}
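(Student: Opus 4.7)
The plan is to observe that the corollary is essentially vacuous. The pattern $\hat{1}23\ldots(p+1)$ is a hatted pattern of length $k=p+1$ in $S_k$, with the hat on $\tau_1=1$. By the definition of hatted-pattern avoidance given in Section~\ref{S:2}, a permutation $\pi\in S_n$ fails to avoid $\hat{1}23\ldots(p+1)$ only if it contains some subsequence $\pi_{j_1}\pi_{j_2}\cdots\pi_{j_{k-1}}$ whose reduction equals $red(\tau\setminus\tau_1)=12\ldots p$, and this subsequence cannot be expanded inside $\pi$ to one of reduction $12\ldots(p+1)$. The first requirement already demands $p$ distinct indices $j_1<\cdots<j_p$ inside the length-$n$ permutation $\pi$.

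First I would note that when $p>n$, no such subsequence can exist, simply because $\pi$ has only $n<p$ entries. Consequently the universal condition in the definition of $\hat\tau_{(1)}$-avoidance holds vacuously, so every $\pi\in S_n$ avoids $\hat{1}23\ldots(p+1)$. Combining with the $132$-avoidance restriction yields
\[
S_n(132,\hat{1}23\ldots(p+1))=S_n(132),
\]
which is the claim.

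There is no real obstacle here: the entire content of the corollary is a length/cardinality comparison ($p>n$ rules out subsequences of length $p$). The only thing worth being careful about is ensuring that the hat lies on the element whose removal leaves a length-$p$ pattern (here, $\tau_1=1$), so that the condition to be checked genuinely requires $p$ elements from $\pi$; with the hat on the leftmost entry as in the statement, this is immediate. I would therefore present the proof as a one-line vacuous-argument, perhaps remarking that the same reasoning applies to any hatted pattern of length exceeding $n+1$.
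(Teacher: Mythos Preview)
Your argument is correct: when $p>n$ no permutation in $S_n$ has an increasing subsequence of length $p$, so the defining condition for $\hat 1 2\ldots(p+1)$-avoidance is vacuously satisfied and the extra restriction disappears. (Your caveat about where the hat sits is harmless here, since removing \emph{any} entry from $12\ldots(p+1)$ leaves a length-$p$ pattern; by Lemma~\ref{L:hat1} the position of the hat is immaterial for this particular $\tau$.)

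The paper itself gives no proof of this corollary; it simply records it after Theorem~\ref{T:cat}. The intended reading is presumably via that theorem: $\phi$ sends $S_n(132,\hat 1 2\ldots(p+1))$ bijectively onto Dyck $n$-paths with no peak at height $p$, and a Dyck $n$-path has height at most $n<p$, so every Dyck $n$-path qualifies and the restricted class coincides with $S_n(132)$. (Strictly speaking Theorem~\ref{T:cat} is stated only for $1\le p\le n$, but its proof goes through verbatim for $p>n$.) Your direct length argument bypasses the bijection entirely and is the more economical route; the bijective reading, on the other hand, is what justifies the paper's remark about a ``discrete continuity from the Catalan sequence to itself.''
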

This shows a discrete continuity from the Catalan sequence to itself.
\subsection{Dyck paths with no $udd\cdots du$}\label{S:31}
Let $A$ be a sequence of distinct integers, then $A=A^L m A^R$, where $m$ is the greatest element in $A$ and $A^L$, $A^R$ are subsequences of $A$. We define recursively a map, denoted by $\theta$, on the set of integer sequences as follows:
\begin{itemize}
\item $\theta(\epsilon)=\epsilon$;
\item $\theta(A)=\theta(A^L)u_{m}\theta(A^R)d_{m}$. 
\end{itemize}
This recursive process will finally give a Dyck path with each step indexed by an integer in $A$.  We call $\theta(A)$ an \emph{indexed} Dyck path. 
\begin{lemma}\label{L:bij} The map $\theta$ restricted on $\s_n(132)$ is bijective to Dyck $n$-paths.
\end{lemma}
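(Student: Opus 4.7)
The plan is to induct on $n$, exploiting the fact that $\theta$ is defined by a recursion that mirrors the structural decomposition of $132$-avoiding permutations around their maximum. The base case $n=0$ is trivial since $\theta(\epsilon)=\epsilon$ and there is exactly one empty Dyck path. Note that although $\theta$ is defined on arbitrary sequences of distinct integers, its output depends only on the reduction of the input, so we may freely treat $\theta(\pi^L)$ and $\theta(\pi^R)$ as applications of $\theta$ to shorter permutations (up to relabeling the indices on the edges).

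For the inductive step, take $\pi\in\s_n(132)$ and let $k$ be the position of the maximum value $n$, so that $\pi=\pi^L\, n\, \pi^R$ with $|\pi^L|=k-1$ and $|\pi^R|=n-k$. The key structural observation from $132$-avoidance is that every element of $\pi^L$ must exceed every element of $\pi^R$: otherwise some $\pi_i\in\pi^L$, the value $n$, and some $\pi_j\in\pi^R$ with $\pi_i<\pi_j$ would form a $132$-pattern. Consequently, $\pi^L$ is a permutation of $\{n-k+1,\dots,n-1\}$ and $\pi^R$ is a permutation of $\{1,\dots,n-k\}$; both are $132$-avoiding as subsequences of $\pi$. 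By the induction hypothesis, $\theta(\pi^L)$ is (up to indexing) a Dyck $(k-1)$-path and $\theta(\pi^R)$ is a Dyck $(n-k)$-path, so $\theta(\pi)=\theta(\pi^L)\,u_n\,\theta(\pi^R)\,d_n$ is a well-defined Dyck $n$-path.

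For the inverse map, I would use the fact that every nonempty Dyck $n$-path $D$ has a canonical decomposition $D=D^L u D^R d$ in which the final step is the distinguished $d$ and the displayed $u$ is the unique up-step matched to this final $d$: specifically, after all steps before this $u$ the path is at height $0$ (so $D^L$ is itself a Dyck path), and after removing the bracketing pair $u,d$ the middle portion $D^R$ is a Dyck path as well. The decomposition is unique because the last down-step and its matching up-step are intrinsic to $D$. By the inductive hypothesis, $D^L=\theta(\sigma^L)$ and $D^R=\theta(\sigma^R)$ for unique $132$-avoiding permutations $\sigma^L\in\s_{k-1}(132)$ and $\sigma^R\in\s_{n-k}(132)$; relabeling so that $\sigma^L$ uses values $\{n-k+1,\dots,n-1\}$ and $\sigma^R$ uses values $\{1,\dots,n-k\}$, we assemble $\pi=\sigma^L\,n\,\sigma^R$, which lies in $\s_n(132)$ (the structure of values across $n$ precludes $132$-patterns involving $n$, while those not involving $n$ are ruled out by the inductive claim on $\sigma^L$ and $\sigma^R$) and satisfies $\theta(\pi)=D$.

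The main delicacy is not in any single calculation but in confirming that the decomposition of a Dyck path around its terminal down-step exactly mirrors the decomposition of a $132$-avoiding permutation around its maximum: that $k$ can be read off the Dyck path as $1$ plus the number of up-steps strictly preceding the match of the final down-step, and that the matching $u\cdots d$ pair in the image is always the one indexed by the maximum value. Once this correspondence is verified, injectivity follows because $k$, $\pi^L$, and $\pi^R$ are all recovered from $\theta(\pi)$, and surjectivity follows from the inductive construction above.
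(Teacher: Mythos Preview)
Your proof is correct and follows essentially the same approach as the paper: both identify the decomposition $D=D^L\,u\,D^R\,d$ (the paper phrases it as ``the last $d$ is indexed $d_n$ and the last $u$ starting from the $x$-axis is indexed $u_n$'') and match it with the decomposition $\pi=\pi^L\,n\,\pi^R$ of a $132$-avoiding permutation, using the fact that all values in $\pi^L$ exceed those in $\pi^R$. Your write-up is in fact more complete, since you verify that $\theta(\pi)$ is a Dyck path and spell out surjectivity via an explicit inverse, whereas the paper dispatches surjectivity in one line.
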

\begin{proof} 
Let $\pi_1,\pi_2\in S_n(132)$ such that $ \theta(\pi_1)$ and $\theta(\pi_2)$ give the same Dyck $n$-path without indexing $D_n$. By the definition of $\theta$, $D_n$ is illustrated as in Figure \ref{F:theta}. So its last $d$ is indexed as $d_n$ and the last $u$ which starts from a point on the $x$-axis is indexed as $u_n$. Therefore, $red(\pi_1^L)=red(\pi_2^L)$ and $red(\pi_1^R)=red(\pi_2^R)$. On the other hand, since $\pi_1,\pi_2\in S_n(132)$, their elements are distributed as in Figure \ref{F:132}, \emph{i.e.} all elements of the left part are greater than those of the right part. Therefore, $\pi_1^R=\pi_2^R$ and recursively, $\pi_1^L=\pi_2^L$. Hence, $\theta$ is injective. The surjectivity of $\theta$ is deduced clearly from the recursive definition of $\theta$.
\begin{figure}[h]
\centering
\subfigure[Dyck path given recursively by $\theta$]{\label{F:theta}\includegraphics[width=5cm]{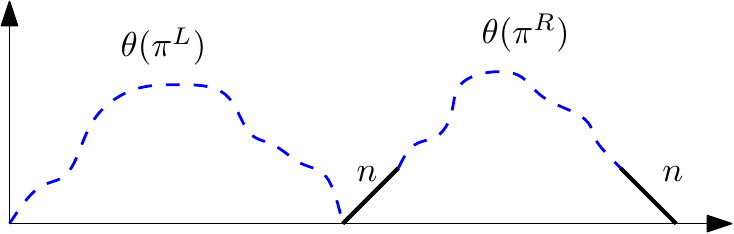}}\hspace{1cm}
\subfigure[$132$-avoiding permutation]{\label{F:132} \includegraphics[width=4.5cm]{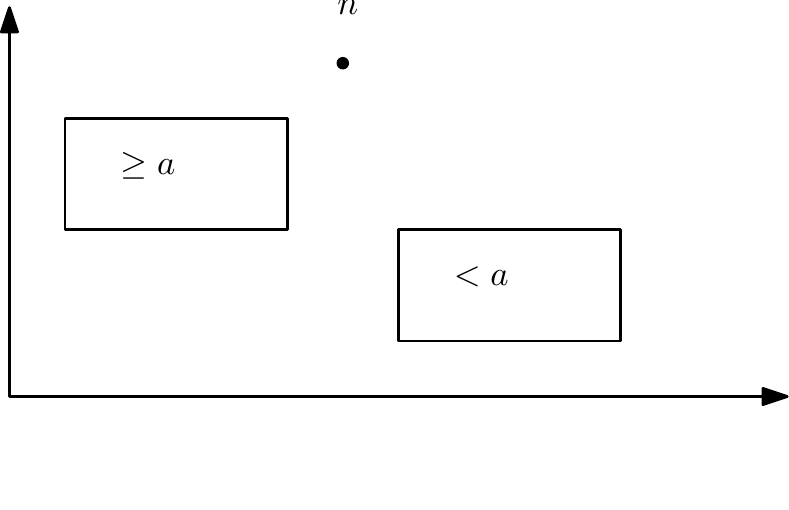}}
\caption{}
\end{figure}
\end{proof}

\noindent{\bf Remark:} 
\begin{itemize}
\item $\theta$ is a modified generalization of the following standard bijection \cite{CK08}:
$$\phi(\pi)=u\phi(\pi^L) d \phi(\pi^R).$$
The non-recursive version of this standard bijection has been introduced in Section \ref{S:32}. 
%In the recursive formula of $\theta$, if we replace with $\theta$ by $\phi$ as follows $\phi(red(A))=u\phi(red(A^L)) d \phi(red(A^R))$, then this $\phi$ actually coincides to the $\phi$ we mentioned in Section \ref{S:32} and it is known as a recursive version of the standard bijection \cite{CK08}. 
\item Although the recursive formulas of $\phi$ and $\theta$ are quite similar, their given Dyck paths are completely different. For instance, see  Figure \ref{F:dyck},
$$\phi(452361)=u\theta(4523)d\theta(1)=uu\phi(4)d\phi(23)dud=uuuddu\phi(2)ddud=uuudduudddud,$$ 
$$\theta(452361)=\theta(4523)u_6\theta(1)d_6=\theta(4)u_5\theta(23)d_5u_6u_1d_1d_6=u_4d_4u_5u_2d_2u_3d_3d_5u_6u_1d_1d_6.$$ 
%Their corresponding Dyck $6$-paths are showed in Figure \ref{F:dyck}. 
\item The standard bijection $\phi$ restricted on $\s_n(132,(p-1)\dots \hat 1p)$ is not bijective to Dyck $n$-paths with no $ud\cdots du$. For instance, $5462137\in \s_7(132,2\hat 13)$ but $\phi(5462137)$ contains $udu$ (see Figure \ref{F:dyck4}). 
%The rest of this section is for purpose of proving the bijection $\theta$ from $\s_n(132,(p-1)\dots \hat 1p)$ to Dyck $n$-paths with no $ud\cdots du$. {\bf review lai cau nay}
\end{itemize}

%It is clear that like $\phi$, $\theta$ is also a bijection between $\s_n(132)$ and the set of Dyck $n$-paths. 

We call each pair $(u_k, d_k)$ in an indexed Dyck path $\theta(\pi)$ a \emph{well-matching} pair. Conversely, by Lemma \ref{L:bij}, given a Dyck $n$-path, its steps are uniquely indexed by integers on $[n]$ to be an image by $\theta$. The following properties are straightforward from the construction of $\theta$:
\begin{enumerate} 
\item[(i)] If two well-matching pairs are overlapped then the one with smaller index is nested within the other, {\it e.g.} ($\cdots u_5 \cdots u_2 \cdots d_2 \cdots d_5 \cdots$);% (by the construction of $\theta$);
\item[(ii)] Two consecutive steps $ud$ creating a peak are indexed $u_kd_k$ for some $k$;% (by the construction of $\theta$);
\item[(iii)] Two consecutive steps $du$ creating a valley are indexed $d_{k-1}u_k$. 
\end{enumerate}

\begin{figure}[h]
\centering
\subfigure[by the standard bijection $\phi$] {\label{F:dyck1}\includegraphics[width=5cm]{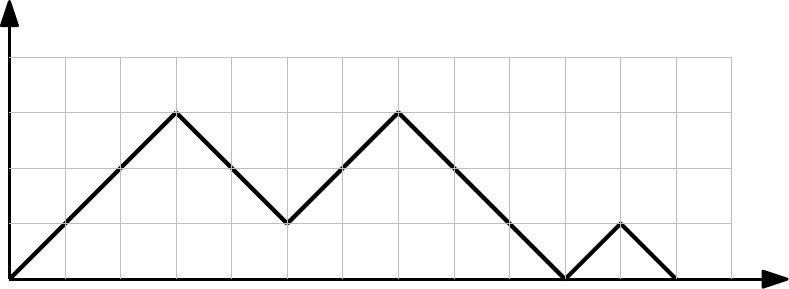}}\hspace{1cm}
\subfigure[by the modified bijection $\theta$]{\label{F:dyck2}\includegraphics[width=6cm]{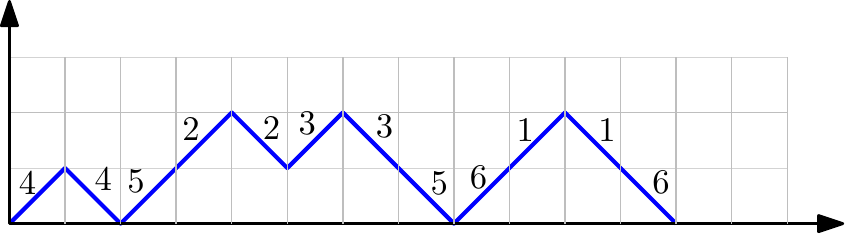}}
\caption{The images of $452361\in S_6$}\label{F:dyck}
\end{figure}

\begin{theorem}\label{T:udu}
Let $n,p$ be positive integers and $p\geq 3$. The map $\theta$ restricted on $\s_n(132,(p-1)(p-2)\cdots 2\hat 1p)$  is bijective to Dyck $n$-paths with no $p$ consecutive steps $udd\cdots d u$. 
\end{theorem}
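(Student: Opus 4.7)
My strategy is to translate $ud^{p-2}u$ in $\theta(\pi)$ into a structural statement about $\pi$'s Cartesian recursion tree, and then match it with the definition of the hatted pattern.

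\textbf{Structure and sufficiency.} Using properties (i)--(iii) above, any contiguous occurrence of $ud^{p-2}u$ in $\theta(\pi)$ must be indexed as $u_k d_k d_{k_2}\cdots d_{k_{p-2}} u_{k_{p-2}+1}$ with $k<k_2<\cdots<k_{p-2}$: the initial $ud$ is a peak by (ii), consecutive $dd$'s close nested well-matching pairs with strictly increasing indices by (i), and the terminating $du$ valley has consecutive indices by (iii). In the Cartesian tree this reads: $k$'s right subtree is empty, $k_j$ is the right child of $k_{j+1}$ for $1\le j\le p-3$, and $k_{p-2}$ is the left child of $k_{p-2}+1$ (using that in a $132$-avoiding tree the left child of any $v$ equals $v-1$). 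Unfolding the recursion, the elements $k_{p-2},k_{p-3},\ldots,k_2,k,k_{p-2}+1$ appear in $\pi$ in this order with $k$ and $k_{p-2}+1$ adjacent, while between $k_{j+1}$ and $k_j$ lie only the elements of $A_j^L$ whose values fall in $(k_{j-1},k_j)$. This subsequence reduces to $red(\tau\setminus\tau_i)=(p-2)(p-3)\cdots 1(p-1)$, and the three possible extension insertions producing $\tau=(p-1)\cdots 21p$ all fail: inserting before $k_{p-2}$ needs an integer in the empty interval $(k_{p-2},k_{p-2}+1)$; inserting between $k$ and $k_{p-2}+1$ needs a missing position; inserting between $k_{j+1}$ and $k_j$ needs a value in $(k_j,k_{j+1})$, disjoint from the available interval $(k_{j-1},k_j)$. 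Hence $\pi$ contains the hatted pattern.

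\textbf{Necessity.} Conversely, suppose $\pi\in\s_n(132)$ contains the hatted pattern with witness $(a_1,\ldots,a_{p-1})$ at $(j_1,\ldots,j_{p-1})$. By 132-avoidance all of $a_1,\ldots,a_{p-2}$ lie in the left subtree of $a_{p-1}$ in the Cartesian tree: otherwise an element of value $>a_{p-1}$ placed between some $a_r$ and $a_{p-1}$ would produce a 132. I then canonicalize the witness: if $j_{p-1}>j_{p-2}+1$, a combination of 132-avoidance and no-extension case (c) forces $\pi_{j_{p-2}+1}\in(a_{p-2},a_{p-1})$; when $\pi_{j_{p-2}+1}>a_1$, I replace $(a_{p-1},j_{p-1})$ by $(\pi_{j_{p-2}+1},j_{p-2}+1)$, and otherwise I replace $(a_{p-2},j_{p-2})$ by $(\pi_{j_{p-1}-1},j_{p-1}-1)$; iterating reaches a canonical witness with $j_{p-1}=j_{p-2}+1$ and $a_{p-1}=a_1+1$. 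In this canonical form $a_{p-2}=\pi_{j_{p-1}-1}$ has empty right subtree, so it begins a right-ancestor chain in the Cartesian tree; no-extension case (b) at each level $s$ forbids any value in $(a_{s+1},a_s)$ between positions $j_s$ and $j_{s+1}$, which is exactly what forces the chain to pass through $a_{p-2}<a_{p-3}<\cdots<a_1$ and to terminate at $a_1$ as the left child of $a_1+1=a_{p-1}$. The chain therefore has length exactly $p-2$, so $ud^{p-2}u$ appears in $\theta(\pi)$ between $u_{\pi_{j_{p-1}-1}}$ and $u_{\pi_{j_{p-1}}}$.

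\textbf{Main obstacle.} The hardest step is the canonicalization in the necessity direction: a generic witness need not satisfy $a_{p-1}=a_1+1$ or $j_{p-1}=j_{p-2}+1$, and then $\pi_{j_{p-1}-1}$ does not start a right-ancestor chain of length $p-2$. Proving that the canonicalization moves above always preserve the no-extension properties (so that the process terminates at a canonical witness) is the delicate combinatorial core, requiring all three no-extension cases together with $132$-avoidance; once canonicalization succeeds, matching the decreasing subsequence $a_1>\cdots>a_{p-2}$ with the right-ancestor chain is direct.
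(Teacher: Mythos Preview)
Your forward direction (a $ud^{p-2}u$ in $\theta(\pi)$ produces a non-extendable occurrence of $(p-2)(p-3)\cdots 1(p-1)$ in $\pi$) is correct and is exactly the argument the paper gives, phrased in Cartesian-tree language instead of indexed-step language.

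The converse, however, has two real gaps. First, in your canonicalization, the ``otherwise'' branch is not justified: you replace $(a_{p-2},j_{p-2})$ by $(\pi_{j_{p-1}-1},j_{p-1}-1)$, but from non-extension case~(C) and $132$-avoidance you only know $\pi_{j_{p-1}-1}\in(a_{p-2},a_{p-1})$; nothing forces $\pi_{j_{p-1}-1}<a_{p-3}$, so the new tuple may fail to reduce to $(p-2)\cdots 1(p-1)$, and you have not checked that the no-extension properties survive the move. Second, even granting a canonical witness with $j_{p-1}=j_{p-2}+1$ and $a_{p-1}=a_1+1$, your chain argument is incomplete. You invoke case~(b) to exclude values in $(a_{s+1},a_s)$ between positions $j_s$ and $j_{s+1}$, but to conclude that the parent of $a_{s+1}$ in the Cartesian tree is exactly $a_s$ you also need to exclude values \emph{larger than $a_s$} in that position range; $132$-avoidance together with $a_{p-1}=a_1+1$ only bounds such values by $a_1$, not by $a_s$. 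Thus the right-ancestor chain from $a_{p-2}$ could pass through some $y\in(a_{p-3},a_1]$ sitting between positions $j_{p-3}$ and $j_{p-2}$, bypassing $a_{p-3}$ entirely.

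In fairness, the paper's own converse is a one-line ``proved similarly by contradiction'' that implicitly asserts the \emph{same} witness indices $\ell_i$ yield the block $u_{\ell_{p-1}}d_{\ell_{p-1}}\cdots d_{\ell_2}u_{\ell_1}$; this is also not literally true without first normalizing the witness (e.g.\ for $p=3$ one needs $\ell_2=\ell_1-1$). Your instinct that a canonicalization is required is therefore the right one; what is missing is a correct termination argument for it. One clean fix is to canonicalize one coordinate at a time from the right: first force $j_{p-1}=j_{p-2}+1$ by replacing $a_{p-1}$ with the immediate right neighbour of $a_{p-2}$ whenever that neighbour exceeds $a_1$ (using case~(A) and $132$-avoidance to show it must), and then, with $a_{p-1}$ fixed, successively replace each $a_s$ by the nearest-greater-left of $a_{s+1}$, using case~(B) at level $s$ together with the already-established bound $<a_{p-1}=a_1+1$ to pin its value in the correct interval.
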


\begin{proof}
Let $\pi  \in \s_n(132,(p-1)(p-2)\cdots 2\hat 1p)$ and put $D_n=\theta(\pi)$. We prove that $D_n$ does not contain $ud\cdots du$. On the contrary, suppose that $D_n$ contains $ud\cdots du$ which are indexed $u_{\ell_{p}}d_{\ell_{p-1}}\cdots d_{\ell_2}u_{\ell_1}$. Then we have the followings:

\begin{itemize}
\item $\ell_p=\ell_{p-1}$ and $\ell_1=\ell_2+1$ (by $(ii)$, $(iii)$).
\item $$\ell_1>\ell_1-1=\ell_2>\ell_3\cdots>\ell_{p-2}>\ell_{p-1},$$ since by $(i)$, the order of the indexed up and down steps in $D_n$ must be $$\cdots u_{\ell_2}\cdots u_{\ell_3}\cdots u_{\ell_{p-2}}\cdots u_{\ell_{p-1}}d_{\ell_{p-1}}d_{\ell_{p-2}}\cdots d_{\ell_{3}}d_{\ell_{2}}u_{\ell_1}\cdots d_{\ell_{1}}\cdots$$ Hence,   
\item $$\pi=\cdots \ell_2\cdots \ell_3\cdots \ell_{p-1}\ell_1\cdots$$ 
with  $\ell_{p-1}$ attached to $\ell_1$, since by the construction of $\theta$,  $\pi$ is determined by getting the indices of the up steps in the indexed Dyck path. 
\end{itemize}
Hence, $red(\ell_2 \ell_3 \cdots \ell_{p-1}\ell_1) = (p-2)(p-3)\cdots 1(p-1)$. Furthermore, 

\begin{itemize}
\item there is not any element $k$ between $\ell_{i-1}$ and $\ell_i$ in $\pi$ such that $\ell_{i} < k < \ell_{i-1}$ for $i=3,4,\dots,p-1,$ otherwise $(u_k,d_k)$ is within $(u_{\ell_{i-1}}, d_{\ell_{i-1}})$ and contains $(u_{\ell_{i}}, d_{\ell_{i}})$ which contradicts the hypothesis that $d_{\ell_{i-1}}$ attaches to $d_{\ell_{i}}$. 
\item there is not any element between $\ell_{p-1}$ and $\ell_1$.
\item there is not any element $k$ before $\ell_2$ in $\pi$ such that $\ell_2<k<\ell_1$ since $\ell_2=\ell_1-1$.    
\end{itemize}
Therefore, the subsequence $\ell_2 \ell_3 \cdots \ell_{p-1}\ell_1$ of $\pi$ can not be expanded at any positions into the subsequence whose reduction is $(p-1)(p-2)\cdots 1p$. This is a contradiction.

Conversely, it is proved similarly by contradiction that if  $\pi$ contains a subsequence $\ell_2\ell_3\cdots\ell_{p-1}\ell_1$, whose reduction is $(p-2)(p-3)\cdots 1(p-1)$, which is not able to be expanded into the pattern $(p-1)(p-2)\cdots 1p$ at any positions, then $\theta(\pi)$ contains $u_{\ell_{p-1}}d_{\ell_{p-1}}d_{\ell_{p-2}}\cdots d_{\ell_{2}}u_{\ell_1}$. 
\end{proof}

\begin{figure}[h]
\centering
\includegraphics[width=8cm]{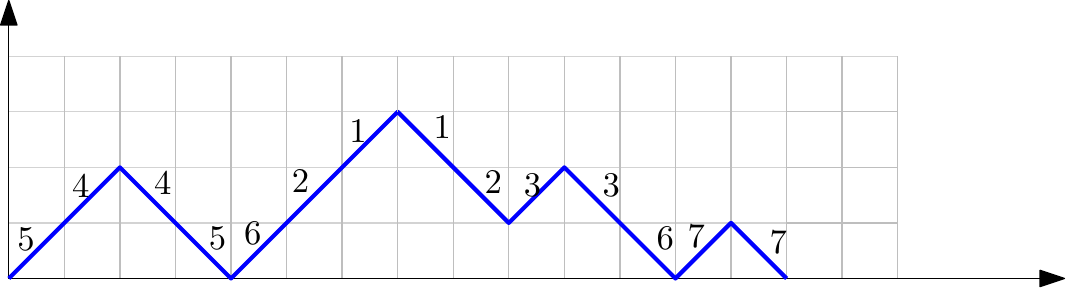}
\caption{The Dyck path with no $udu$ corresponding to $5462137\in \s_7(132, \hat 213)$ by $\theta$.}
\label{F:Dyck3}
\end{figure}
Figure \ref{F:Dyck3} illustrates a Dyck $7$-path with no $udu$ and its preimage by $\theta$ is $5462137$ which avoids both $132$ and $2\hat 13$.

The next Corollary \ref{C:Mot} is straightforward from Theorems \ref{T:udu} and Proposition \ref{P:cal}. 
%\begin{corollary}
%The number of permutations of length $(n+1)$ avoiding both pattern $132$ and hatted pattern $2\hat{1}3$ is equal to the number of Motzkin $n$-paths.
%\end{corollary}
\begin{proposition}[\cite{Cal04}]\label{P:cal} Dyck $n$-paths with no $udu$ counts $(n-1)$-th Motzkin number.
\end{proposition}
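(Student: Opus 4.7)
The plan is to construct an explicit recursive bijection $\Psi$ sending each Dyck $n$-path $D$ (with $n\geq 1$) avoiding $udu$ to a Motzkin path of length $n-1$, mirroring the Motzkin recurrence $M = 1 + xM + x^2 M^2$. The starting point is the first-return decomposition $D = u D_1 d D_2$ of a non-empty Dyck path. The no-$udu$ condition forces either $D = ud$, or else both $D_1$ and $D_2$ avoid $udu$ and $D_1$ is non-empty: if $D_1$ were empty while $D_2$ is non-empty, then $D_2$ would begin with $u$ and the first three steps of $D$ would read $udu$, which is forbidden.

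I would then define
\[
\Psi(ud) = \epsilon, \qquad \Psi(u D_1 d) = F \, \Psi(D_1), \qquad \Psi(u D_1 d D_2) = U \, \Psi(D_1) \, D \, \Psi(D_2),
\]
where the second rule applies when $D_1$ is non-empty and $D_2 = \epsilon$, and the third when both $D_1$ and $D_2$ are non-empty. A direct length count gives $|\Psi(D)| = |D|/2 - 1$: the first rule produces the empty word, the second rule adds one symbol, and the third adds two, each matching the reduction in Dyck half-length. The three clauses correspond to the three summands $1$, $xM$ and $x^2 M^2$ in the Motzkin equation, so $\Psi(D)$ is always a valid Motzkin path.

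For invertibility I would define $\Psi^{-1}$ by the dual case split on a Motzkin path $M$ (empty, $M = F M'$, or $M = U M_1 D M_2$) and check that the resulting Dyck path avoids $udu$. This amounts to a junction check: in $D = u D_1 d$ the flanks produce only windows of the form $u u ?$ and $? d d$, so no $udu$ is created; in $D = u D_1 d D_2$ with $D_2$ non-empty, the junction after the returning $d$ reads $d d u$ (since $D_1$ non-empty forces a $d$ immediately before the returning $d$), which again avoids $udu$. Combined with the inductive hypothesis on $D_1$ and $D_2$, this shows $\Psi$ is bijective.

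The main obstacle is to keep the asymmetry in the first-return decomposition straight: the constraint ``$D_1$ non-empty whenever $D_2$ is'' is precisely what turns the Catalan-style recurrence $C = 1 + xC^2$ into $f = 1 + x + x(f-1)f$, and in turn makes the three clauses of $\Psi$ match the three Motzkin summands. Once this asymmetric rule is correctly identified, the bijection and its inverse are essentially forced.
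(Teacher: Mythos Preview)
Your argument is correct. Note that the paper does not actually prove this proposition: it is quoted from Callan and used as a black box to obtain Corollary~\ref{C:Mot}. Your recursive bijection via the first-return decomposition supplies a clean, self-contained proof. The key observation---that in $D=uD_1dD_2$ the no-$udu$ constraint forbids precisely the case $D_1=\epsilon$, $D_2\neq\epsilon$---is exactly what collapses the Catalan decomposition into the three Motzkin cases (empty, flat-prefixed, up-prefixed), and your functional identity $f=1+x+x(f-1)f$ is equivalent to $M=1+xM+x^2M^2$ under $f=1+xM$.

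One small point worth making explicit in the inverse direction: when you set $\Psi^{-1}(FM')=u\,\Psi^{-1}(M')\,d$ and $\Psi^{-1}(UM_1DM_2)=u\,\Psi^{-1}(M_1)\,d\,\Psi^{-1}(M_2)$, observe that $\Psi^{-1}$ always returns a \emph{non-empty} Dyck path (of semilength one more than the Motzkin length), so the image really lands in the intended clause of $\Psi$. With that bookkeeping stated, the bijection and its inverse are fully justified.
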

\begin{corollary}\label{C:Mot}
$\s_n(132,2\hat 13)$ counts $(n-1)$-th Motzkin numbers.
\end{corollary}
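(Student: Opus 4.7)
The plan is to obtain this corollary as an immediate specialization of Theorem \ref{T:udu} chained with Proposition \ref{P:cal}. First I would set $p=3$ in Theorem \ref{T:udu}. With this choice, the hatted pattern $(p-1)(p-2)\cdots 2\hat 1 p$ reduces to the three-letter pattern $2\hat 1 3$, and the forbidden factor $udd\cdots du$ of length $p$ reduces to the three-step factor $udu$. Hence Theorem \ref{T:udu} at $p=3$ asserts that $\theta$ restricts to a bijection
\[
\theta:\s_n(132,2\hat 13)\;\longrightarrow\;\{\text{Dyck $n$-paths with no }udu\}.
\]

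Next I would invoke Proposition \ref{P:cal}, which counts Dyck $n$-paths with no $udu$ by the $(n-1)$-th Motzkin number. Composing the two identifications yields
\[
|\s_n(132,2\hat 13)| \;=\; |\{\text{Dyck $n$-paths with no }udu\}| \;=\; M_{n-1},
\]
which is exactly the statement of the corollary.

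Because the argument is purely a substitution followed by citation, there is no genuine obstacle; the only point deserving a line of verification is the parsing of the pattern and the forbidden factor at $p=3$, to make sure nothing degenerate happens (e.g.\ that the general-$p$ proof of Theorem \ref{T:udu} indeed covers $p=3$, which it does since the hypothesis there is $p\geq 3$). No additional lemmas, case analyses, or independent computations are required.
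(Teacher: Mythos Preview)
Your proposal is correct and matches the paper's own argument exactly: the paper states that the corollary is straightforward from Theorem~\ref{T:udu} (specialized to $p=3$) and Proposition~\ref{P:cal}. Your check that $p=3$ is admissible since Theorem~\ref{T:udu} assumes $p\geq 3$ is the only thing worth noting, and you noted it.
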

In Section \ref{S:4} we will give a new direct bijection from $\s_n(132,2\hat 13)$ to Motzkin $(n-1)$-paths. 
\subsection{Wilf-equivalence through Simion-Schmidt's bijection.} \label{S:33} In this section we use Simion-Schmidt's bijection to find some other patterns belonging to the same Wilf class to that investigated in Sections \ref{S:32}. 
$\ \\$

\begin{algorithm}
\KwIn{A permutation $\pi=\pi_1\pi_2\cdots\pi_n$ in $\s_n(132)$}
\KwOut{A permutation $\sigma=\sigma_1\sigma_2\cdots\sigma_n$ in $\s_n(123)$}

$\sigma_1:=\pi_1$ \;
$x:=\pi_1$ \;
\ForEach{$i = 2,\ldots, n$}{
	\eIf{$\pi_i < x$}{
		$\sigma_i:=\pi_i$ \;
		$x:=\pi_i$ \;}
	{
			$\sigma_i:=\max\{k|x<k\leq n, k\neq \sigma_j$ for all $j<i\}$\;
		}
	
}

\caption{Simion-Schmidt \cite{SS85}}\label{Algo_SS}
\end{algorithm}

The map $\pi \mapsto \sigma$ defined by Algorithm \ref{Algo_SS} is the Simion-Schmidt bijection \cite{SS85}. As an example, $7561234  \in \s_7(132)$ maps to $7561432 \in \s_7(123)$.

\begin{lemma}
Simion-Schmidt's map is a bijection from $\s_n(132,12\cdots\hat{p})$ to \\$\s_n(123,1\hat{p}(p-1)\cdots2)$.
\end{lemma}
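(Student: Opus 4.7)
The plan is to exhibit a structural invariant shared by $\pi$ and $\sigma$ that simultaneously controls containment of both hatted patterns. Inspection of Algorithm~\ref{Algo_SS} shows that $\sigma_i=\pi_i$ precisely at the left-to-right minimum positions of $\pi$, and that these are also the LTR minimum positions of $\sigma$. Hence $\pi$ and $\sigma$ share the same LTR minima both in position and in value. Moreover, at any LTR minimum $m=\pi_{i_0}=\sigma_{i_0}$, every entry before position $i_0$ exceeds $m$ in both permutations, so the number of entries greater than $m$ appearing after position $i_0$ equals $(n-m)-(i_0-1)$ in both; call this common quantity the \emph{height} of the LTR minimum.

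I would then prove two matching characterizations. On the $132$-side, since $12\cdots\hat p$ coincides with $\hat 1 2\cdots p$ by Lemma~\ref{L:hat1}, the argument of Theorem~\ref{T:cat} (applied to a pattern of length $p$ in place of $p+1$) yields: $\pi\in\s_n(132)$ contains $12\cdots\hat p$ if and only if some LTR minimum of $\pi$ has height exactly $p-2$. On the $123$-side I claim the analogous fact: $\sigma\in\s_n(123)$ contains $1\hat p(p-1)\cdots 2$ if and only if some LTR minimum of $\sigma$ has height exactly $p-2$. This equivalence is the main obstacle. For the $(\Leftarrow)$ direction, I would use that in a $123$-avoider the entries greater than a LTR minimum $m$ and following it form a strictly decreasing sequence $c_1>c_2>\cdots>c_h$ (otherwise $m$ with an increasing pair gives a $123$); when $h=p-2$ the subsequence $m,c_1,\ldots,c_{p-2}$ reduces to $1(p-1)(p-2)\cdots 2$, and I would enumerate the $p-1$ admissible slots in which an extra element $e$ could be inserted to upgrade the reduction to $1p(p-1)\cdots 2$ (a new maximum inserted between $m$ and $c_1$; a value strictly between $c_{j}$ and $c_{j+1}$ inserted between their positions; or a value in $(m,c_{p-2})$ placed after all $c_j$) and verify each such slot is empty in $\sigma$ given the hypothesis.

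For the $(\Rightarrow)$ direction, an unextendable witness necessarily has the form $b_1,\alpha_1,\ldots,\alpha_{p-2}$ with $\alpha_1>\cdots>\alpha_{p-2}>b_1$; any $m'<b_1$ occurring before $b_1$ would form a $123$ with $b_1,\alpha_1$, so $b_1$ is a LTR minimum. Writing $c_1>\cdots>c_h$ for the entries exceeding $b_1$ after it, the $\alpha_j$ form a sub-selection $\alpha_j=c_{k_j}$ with $k_1<\cdots<k_{p-2}$; any $c_i\notin\{\alpha_j\}$ lies in exactly one of the index ranges $i<k_1$, $k_r<i<k_{r+1}$, or $i>k_{p-2}$, and in each range $c_i$ inhabits precisely the value-and-position slot demanded by one of the admissible extensions, contradicting unextendability. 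Hence $h=p-2$. Combining the two characterizations with the height-preservation noted above, $\pi$ avoids $12\cdots\hat p$ if and only if $\sigma$ avoids $1\hat p(p-1)\cdots 2$; since Simion-Schmidt is already a bijection $\s_n(132)\to\s_n(123)$, the restriction is the claimed bijection.
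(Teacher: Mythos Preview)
Your argument is correct and is, if anything, more complete than the paper's own proof. The paper proceeds by asserting a direct pattern-to-pattern correspondence under Algorithm~\ref{Algo_SS}: it observes that in any occurrence of $1k(k-1)\cdots 2$ in $\sigma$ the element playing the role of $1$ must be a pivot $x$ (hence an LTR minimum), that the remaining elements all exceed $x$, and that the entries at those positions in $\pi$ are increasing; it then claims that extendability of $1(p-1)\cdots 2$ to $1p(p-1)\cdots 2$ in $\sigma$ matches extendability of $12\cdots(p-1)$ to $12\cdots p$ in $\pi$. Only the forward implication is written out explicitly.

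Your route is genuinely different: instead of tracking occurrences through the algorithm, you extract the invariant ``height of an LTR minimum'' (namely $(n-m)-(i_0-1)$) which is visibly preserved because Simion--Schmidt fixes LTR minima in both position and value, and you reduce each hatted condition to the purely numerical statement ``no LTR minimum has height exactly $p-2$''. The $132$-side is then an immediate reinterpretation of the proof of Theorem~\ref{T:cat}, and for the $123$-side you give a clean self-contained argument exploiting that the entries above an LTR minimum in a $123$-avoider form a decreasing run. This buys you both directions at once and makes the equivalence symmetric and transparent; the paper's approach is shorter to state but leans more heavily on informal correspondences between occurrences in $\pi$ and $\sigma$. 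Either way, the essential hinge---that the ``$1$'' in any witness must sit at a common LTR minimum---is the same.
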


\begin{proof}
According to Algorithm \ref{Algo_SS}, for any pattern $\tau=1k (k-1) \cdots 2$, $k>1$, in $\sigma$, lines $4, 5, 6$ and $8$ guarantee that

\begin{itemize}
	\item  the element in $\sigma$ corresponding to 1 in $\tau$ is equal to a pivot $x$. Thus, all other elements in $\sigma$ corresponding to $k,k-1,\cdots,2$ in $\tau$ are greater than $x$;
	\item  elements in $\pi$ corresponding to $k,k-1,\cdots,1$ in $\tau$ respectively are in the increasing order. 
\end{itemize}

Hence, each pattern $\tau_1=1p\cdots 32$ in $\sigma$ maps to the pattern $\tau_2=12\cdots p$ in $\pi$ and each pattern $\tau_1^*=1(p-1)\cdots 32$ in $\tau_1$ maps to the pattern $\tau_2^*=12\cdots (p-1)$ in $\tau_2$. Regarding the definition of the hatted pattern, if $\pi$ avoids $12\cdots\hat{p}$, it is, each pattern $\tau_2^*$ can be expanded to a pattern $\tau_2$ in $\pi$ at a   position, then $\sigma$ avoids the pattern $1\hat{p}(p-1)\cdots2$, it is, each pattern $\tau_1^*$ can be expanded to a pattern $\tau_1$ in $\sigma$ at a position. Thus if $\pi \in \s_n(132,12\cdots\hat{p})$ then $\sigma \in \s_n(123,1\hat{p}(p-1)\cdots2)$.
\end{proof}

Consequently, the permutations in $\s_{n+1}(123,1\hat{p}(p-1)\cdots2)$ are also viewed as  Dyck $n$-paths with no peak at height $p$. 
\section{Hatted pattern visiting Motzkin paths}\label{S:4}
%$\s_n(132,2\hat 13)$ and Motzkin $n$-paths}\label{S:4}
In this section, we study the set of permutations $S_n(132,2\hat 13)$ which is a special case of the investigated class of permutations in Section \ref{S:31} when $p=3$ . We prove the set of permutations $\s_n(132,2\hat 13)$ is equal to the set of permutations $\s_n(132)$ without two adjacent consecutive numbers. Furthermore, we give a new explicit bijection from $\s_n(132,2\hat 13)$ to Motzkin $(n-1)$-paths as mentioned in the previous section. Finally, by using the ECO method, we show that the Motzkin generating tree coded by permutations in $\s_n(132,3\bar 142)$ \cite{} is now well coded by permutations in $\s_n(132,2\hat 13)$. 
\subsection{Direct bijection between $\s_{n+1}(132,2\hat 13)$ and Motzkin $n$-paths}\label{S:41} %We also give an explicit bijection between the later one and Motzkin $(n-1)$-paths. One result in \cite{???} that the number of Motzkin $n$-paths is equal to the number of Dyck paths of semi-length $n$ avoiding $udu$ is implied as a corollary. To do this 

%This section is to study permutations avoiding $132$ and a hatted specially pattern studied in Section \ref{S:dyc}. We prove that this set is in fact coincide to the set of permutations avoiding $132$ and factors $a(a+1)$. So that two sets of avoiding permutations are showed have the same cardinality to the set of Motzkin paths by bijection. The result in \cite{???} that the number of Motzkin $n$-paths is equal to the number of Dyck paths avoiding $udu$ is implied as an corollary.  
We first recall the definition of Motzkin $n$-paths on a horizontal line, unlike the usual definition of Motzkin paths which start and end on the $x$-axis. 
\begin{definition} Let $n,h$ be non-negative integers. 
\begin{itemize}
\item[(i)] A \emph{Motzkin $n$-path on the line $y=h$} is a lattice path in the integer plane starting and ending at points on $y=h$, which consists of $n$ steps including \emph{up} $(1,1)$, \emph{down} $(1,-1)$ and \emph{flat} $(1,0)$ ones and never runs bellow  $y=h$
\item[(ii)] A Motzkin $n$-path on the line $y=h$ is called \emph{proper on $y=h$} if there is not any flat-steps on $y=h$. 
\end{itemize}
\end{definition}

Motzkin paths are also represented by words of length $n$ on $\{u,d,f\}$ where $u$, $d$, $f$ substitute for up, down and flat step respectively. For the sake of expression of what following, we call two adjacent consecutive numbers $a(a+1)$ in a permutation is \emph{the factor} $a(a+1)$.
%The number of Motzkin $n$-paths (up-to several horizontal and vertical translations) is counted by the sequence A001006 in sloane \cite{sloane}, called the $n$th Motzkin number.
 
%For our convenient presenting we introduce the following conception. Given positive integers $n$ and $a$ such that $1\leq a\leq n-1$. We say that a permutation $\pi=\pi_1\dots\pi_n$ \emph{avoids factor} $a(a+1)$ if there exists no element $\pi_i$ of $\pi$ such that $\pi_i=a$ and $\pi_{i+1}=a+1$. Otherwise, $\pi$ is called \emph{contain factor} $a(a+1)$.
\begin{theorem} \label{T:fac}
Let $\pi\in\s_n$. Then $\pi\in\s_n(132, 2\hat 13)$ if and only if $\pi\in\s_n(132)$ and $\pi$ does not contain any factor $a(a+1)$ for $1 \leq a \leq n-1$. 
\end{theorem}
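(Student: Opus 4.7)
The plan is to first make explicit the local condition, for a $132$-avoiding permutation, equivalent to containing $2\hat 13$. Consider an ascent $\pi_{j_1}\pi_{j_2}$ (so $j_1<j_2$ and $\pi_{j_1}<\pi_{j_2}$) and ask when a third element $x$ of $\pi$ can be added to produce a $213$ occurrence. A quick case analysis on the position of $x$ shows: inserting $x$ after $j_2$ can never yield $213$ since $\pi_{j_1}<\pi_{j_2}$; inserting $x$ strictly between $j_1$ and $j_2$ requires $x<\pi_{j_1}$; and inserting $x$ before $j_1$ requires $\pi_{j_1}<x<\pi_{j_2}$. Call the ascent \emph{bad} when neither witness exists, i.e., when (i) no value in $(\pi_{j_1},\pi_{j_2})$ appears before position $j_1$ and (ii) no value less than $\pi_{j_1}$ appears strictly between positions $j_1$ and $j_2$. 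Then $\pi$ contains $2\hat 13$ iff $\pi$ admits a bad ascent.

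For the ``only if'' direction I argue contrapositively. If $\pi$ has a factor $\pi_i\pi_{i+1}=a(a+1)$, this is automatically an ascent, and it is bad: (i) fails trivially because no integer lies strictly between $a$ and $a+1$, and (ii) fails trivially because there are no positions strictly between $i$ and $i+1$. Hence $\pi$ contains $2\hat 13$. This direction needs nothing more.

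For the converse, assume $\pi\in S_n(132)$ has no factor $a(a+1)$ and suppose for contradiction it admits a bad ascent; choose one, $(j_1,j_2)$, minimizing $j_2-j_1$. First I would show $j_2=j_1+1$: if some $\pi_m$ lies strictly between, then (ii) forces $\pi_m>\pi_{j_1}$ and $132$-avoidance applied to the triple $\pi_{j_1}\pi_m\pi_{j_2}$ forces $\pi_m<\pi_{j_2}$; conditions (i) and (ii) for $(j_1,m)$ are immediate consequences of the same conditions for $(j_1,j_2)$, so $(j_1,m)$ is also bad, contradicting minimality. Next, writing $a=\pi_{j_1}$ and $b=\pi_{j_2}$, I would locate the value $a+1$ under the assumption $b>a+1$: it cannot sit before $j_1$ (else $a+1\in(a,b)$ violates (i)), and it cannot sit after $j_2$ (else the triple $a,b,a+1$ reduces to $132$). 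Hence $b=a+1$, so $\pi_{j_1}\pi_{j_2}$ is a forbidden factor, which is the sought contradiction. The main obstacle is the minimality step: one has to verify carefully that both ``no-witness'' conditions shrink monotonically when the ascent is shortened, and that $132$-avoidance is precisely what pins the intermediate value $\pi_m$ into the open interval $(\pi_{j_1},\pi_{j_2})$.
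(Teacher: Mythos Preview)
Your proof is correct. The case analysis for extending an ascent to a $213$ pattern is right, the ``only if'' direction via the factor $a(a+1)$ being automatically bad is fine, and the minimality argument in the converse is carefully checked: the $132$-avoidance pins an intermediate $\pi_m$ into $(\pi_{j_1},\pi_{j_2})$, both no-witness conditions shrink when the ascent is shortened, and once $j_2=j_1+1$ the location of the value $a+1$ is forced.

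Your route is genuinely different from the paper's. The paper invokes the left-to-right-minimum block decomposition of $132$-avoiding permutations (Lemma~\ref{L:ltrm}) and splits any ascent $\pi_i\pi_j$ into two cases: if $\pi_i,\pi_j$ lie in different blocks, the LTR minimum of the block containing $\pi_j$ supplies the ``$1$'' for a $213$; if they lie in the same (increasing) block, one argues $\pi_j\ge\pi_i+2$ and then chases the value $\pi_i+1$ to a position before $\pi_i$. Your argument avoids the block structure entirely, replacing it with an extremal choice (shortest bad ascent) that reduces directly to adjacent positions. The paper's approach ties the result to machinery used repeatedly later (the LTRM blocks underlie the geometric representation and the Motzkin bijection), while yours is more self-contained and would work verbatim in any context where only $132$-avoidance is available, without importing the block lemma.
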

\begin{proof}
In order to prove the ``if" part, on the contrary we assume that $\pi$ contains a factor $a(a+1)$ for some $1 \leq a \leq n-1$. So $a(a+1)$ is also a subsequence of $\pi$ and has reduction $12$. This factor can not be expanded into the pattern $213$ in $\pi$ at any positions, before $a$ as well as after $(a+1)$. Therefore, $\pi$ contains $2\hat13$ which is a contradiction.

\noindent Conversely, it is sufficient to prove that $\pi$ avoids $2\hat13$. Let us take an increasing subsequence $\pi_i\pi_j$ of $\pi$ with $\pi_i<\pi_j$ and $i<j$. Since $\pi$ avoids $132$, its $LTRM$-blocks satisfy the conditions in Lemma \ref{L:ltrm}. We consider two following cases: 
\begin{itemize}
\item $\pi_i$ and $\pi_j$ are in two different $LTRM$-blocks. Then $\pi_j$ not a $LTR$ minimum. So $\pi_i$, the minimum of the $LTRM$- block containing $\pi_j$, and $\pi_j$ form a subsequence of $\pi$ whose reduction is $213$. 
\item $\pi_i$ and $\pi_j$ are in the same $LTRM$-block. Since $\pi$ does not contain the factor $\pi_i(\pi_i+1)$ and the elements in the same block form an increasing sequence, $\pi_j\geq \pi_i+2$ and $\pi_i+1$ is not between $\pi_i$ and $\pi_j$ in $\pi$. Furthermore, $\pi_i+1$ is not after $\pi_j$, otherwise $\pi$ contains the subsequence  $\pi_i\pi_j(\pi_i+1)$ whose reduction is $132$. In other words, $\pi_i+1$ must appear before $\pi_i$ in $\pi$. Therefore, $(\pi_i+1) \pi_i \pi_j$ is a subsequence of $\pi$ whose reduction is $213$.
\end{itemize}
 Hence, $\pi$ avoids $2\hat 13$ in any cases. 
\end{proof}
Now let $\pi\in \s_n$ with $LTRM$ indices $i_1,\dots,i_k$ such that its $LTRM$-blocks satisfy the conditions in Lemma \ref{L:ltrm}, that is  
\begin{itemize}
\item[i)] the first elements of the blocks are decreasing from left to right;% that is $\pi_{i_1}>\pi_{i_2}>\cdots>\pi_{i_k}$;
\item[ii)] each block is an increasing sequence. %, that is $\pi_{i_t}<\pi_{i_t+1}<\cdots<\pi_{i_{t+1}-1}$ for all $t=1,2,\dots,k$ (by convention $i_{k+1}=n+1$).
\end{itemize}

In this case, we also say that $\pi$ has a \emph{geometric representation} which is the union of all representations of $LTRM$-blocks of $\pi$. Each $LTRM$-block $\pi_{i_t}\dots\pi_{i_{t+1}-1}$, for $t=1,\dots, k-1$, of $\pi$ is represented by a set of $(i_{t+1}-i_t)$ semi-circles from $(\pi_i,0)$ to $(\pi_{i+1},0)$ on the upper half-plane for $i=i_t,\dots, i_{t+1}-2$. If a $LTRM$-block has only one element then its representation is a single point. See Figure \ref{F:repre_per} as an example. Two $LTRM$-blocks of $\pi$ are called \emph{overlapping} if there exist two intersecting semi-circles in their representations. Figure \ref{F:crossover} illustrates the overlapping and non-overlapping properties of  two different $LTRM$-blocks.   %The geometric representation of $\pi$ is called \emph{crossover} if two of its $LTRM$-blocks have the representation overlapping. Conversely, we said that it is \emph{non-crossover}. 
\begin{figure}[h]
\centering
\includegraphics[width=12cm]{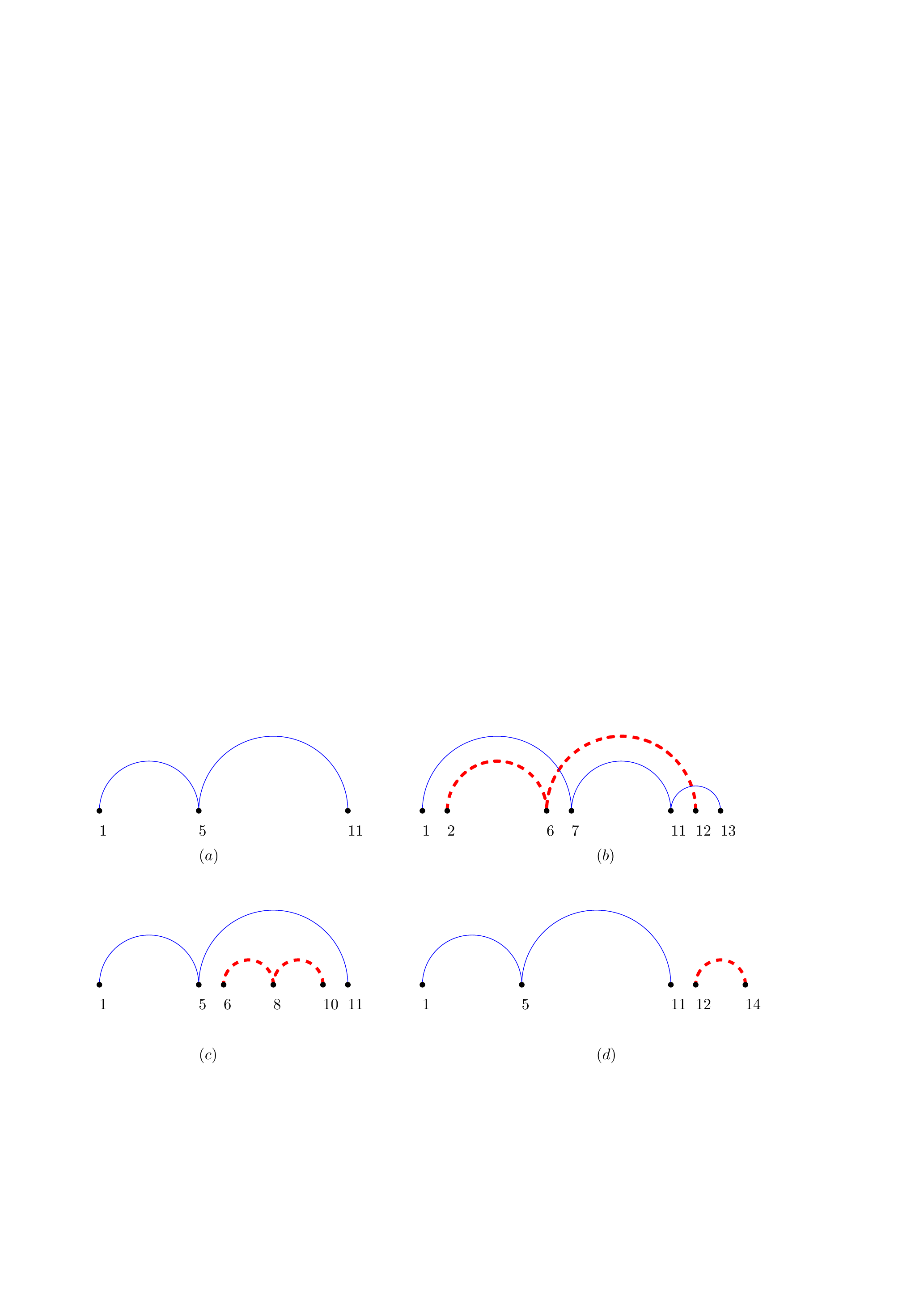}
\caption{(a): The representation of $LTRM$-block $(1, 5, 11)$; (b): Two $LTRM$-blocks $(1, 7, 11, 13)$ and $(2, 6, 12)$ are overlap; (c),(d): Two $LTRM$-blocks are non-overlap.}
\label{F:crossover}
\end{figure}

\begin{lemma}\label{L:132} Let $\pi\in\s_n$ such that $\pi$ has a geometric representation. Then $\pi\in \s_n(132)$ if and only if its $LTRM$-blocks are pairwise non-overlapping.
\end{lemma}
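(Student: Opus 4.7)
The plan is to prove both directions via contrapositive arguments, translating the overlap condition into a concrete interleaving relation on value endpoints. Observe first that, by condition (ii) of the geometric representation, every semi-circle in a block $B$ joins two values $\pi_i$ and $\pi_{i+1}$ that are simultaneously consecutive in position and in value within $B$. Two semi-circles with value endpoints $(a,a')$ and $(b,b')$ (with $a<a'$ and $b<b'$) intersect on the upper half-plane if and only if they interleave, i.e. $a<b<a'<b'$ or $b<a<b'<a'$; otherwise they are either disjoint or properly nested. Moreover, since the $LTR$-minima are strictly decreasing, any two distinct blocks $B_1,B_2$ with $B_1$ preceding $B_2$ in $\pi$ satisfy $\min B_1>\min B_2$.

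For the direction ``overlap $\Rightarrow$ $\pi$ contains $132$'', suppose blocks $B_1$ (appearing first) and $B_2$ (appearing second) admit intersecting semi-circles $(a,a')\in B_1$ and $(b,b')\in B_2$. Since all positions of $B_1$ precede all positions of $B_2$, and since within each block the two endpoints of a semi-circle sit at two consecutive positions of $\pi$, we read $\pi=\cdots a\,a'\cdots b\,b'\cdots$. In the interleaving case $a<b<a'<b'$ the subsequence $(a,a',b)$ has reduction $132$; in the case $b<a<b'<a'$ the subsequence $(a,a',b')$ has reduction $132$. Either way $\pi$ contains $132$.

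For the converse I argue contrapositively: assume $\pi$ contains a $132$-pattern at positions $i<j<k$ with $\pi_i<\pi_k<\pi_j$, and produce an overlap. Let $B_a,B_b,B_c$ be the blocks containing $\pi_i,\pi_j,\pi_k$ respectively. Since each block occupies a contiguous interval of positions and is increasing, the inequalities $i<j<k$ together with $\pi_j>\pi_k$ force $a\leq b<c$. Let $v^-$ be the largest value of $B_b$ strictly below $\pi_k$ and $v^+$ the smallest value of $B_b$ strictly above $\pi_k$; then $(v^-,v^+)$ is a semi-circle of $B_b$ straddling $\pi_k$. The endpoint $v^+$ exists because $\pi_j\in B_b$ with $\pi_j>\pi_k$; the endpoint $v^-$ exists because either $\pi_i\in B_b$ when $a=b$, or $\min B_b<\min B_a\leq\pi_i<\pi_k$ when $a<b$. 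Next, list the values of $B_c$ in increasing order as $w_1<w_2<\cdots<w_m=\pi_k$. Since $w_1=\min B_c<\min B_b\leq v^-$ while $w_m=\pi_k>v^-$, there is an index $s$ with $w_s<v^-<w_{s+1}$; and since $w_{s+1}\leq\pi_k<v^+$, we obtain the interleaving $w_s<v^-<w_{s+1}<v^+$ between the semi-circle $(w_s,w_{s+1})\in B_c$ and the semi-circle $(v^-,v^+)\in B_b$, giving the desired overlap.

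The main subtlety lies in securing the lower endpoint $v^-$ in the converse direction, which splits according to whether $\pi_i$ and $\pi_j$ belong to the same block; here the hypothesis that $LTR$-minima are decreasing (condition (i)) is essential, both to guarantee $\min B_b<\pi_k$ in the case $a<b$ and to guarantee $\min B_c<v^-$, which is what forces the $B_c$-chain to cross $v^-$ and produces the crossing pair of semi-circles.
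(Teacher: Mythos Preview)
Your argument is correct and follows the same contrapositive strategy as the paper in both directions: from an intersecting pair of semi-circles you read off a $132$ pattern, and from a $132$ pattern you locate two blocks whose value ranges interleave. The paper's converse is considerably terser---it simply observes that $\min B_b>\min B_c$ and asserts the overlap---whereas you explicitly construct the crossing pair $(v^-,v^+)\in B_b$ and $(w_s,w_{s+1})\in B_c$, which is a genuine improvement in rigor.

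One small notational slip: when you write ``list the values of $B_c$ in increasing order as $w_1<\cdots<w_m=\pi_k$'', the equality $w_m=\pi_k$ presumes $\pi_k$ is the maximum of $B_c$, which need not hold. Fortunately your subsequent reasoning does not use this; it only uses that $\pi_k$ is one of the $w$'s and that $\pi_k>v^-$, whence the first $w$ exceeding $v^-$ satisfies $w_{s+1}\le\pi_k<v^+$. Either list only the values of $B_c$ not exceeding $\pi_k$, or drop the clause ``$=\pi_k$'' and invoke $\pi_k\in\{w_1,\dots,w_m\}$ directly.
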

\begin{proof}
On the contrary, we assume that there exists two semi-circles from $(\pi_i,0)$ to$(\pi_{i+1},0)$ with $\pi_i<\pi_{i+1}$ and from $(\pi_j,0)$ to $(\pi_{j+1},0)$ with $\pi_j<\pi_{j+1}$ intersecting. Without loss of generality, we can assume that $\pi_i<\pi_j$. We have 
$$\pi_i<\pi_j<\pi_{i+1}<\pi_{j+1}.$$ 

So $\pi$ contains either $\pi_i\pi_{i+1}\pi_j$ (if $i<j$) or $\pi_j\pi_{j+1}\pi_i$ (if $j<i$) as its own subsequence. Moreover, $red(\pi_i\pi_{i+1}\pi_j)=red(\pi_j\pi_{j+1}\pi_i)=132$. This contradicts the $132$-avoiding property of $\pi$.

Conversely, assume that $\pi$ contains a subsequence $\pi_i\pi_j\pi_k$ such that $red(\pi_i\pi_j\pi_k)=132$. It is remarkable that $\pi_j$ and $\pi_k$ are neither $LTR$ minima nor in the same $LTRM$-block. The $LTR$ minimum of the $LTRM$-block containing $\pi_j$ is greater than that of the $LTRM$-block containing $\pi_k$. Therefore, the $LTRM$-blocks containing $\pi_j$ and $\pi_k$ are overlapping which is a contradiction. 
\end{proof}
%{\bf Example.} Let $\pi= 8\ 6\ 5\ 7\ 9 \ 3 \ 2\ 1 \ 4 \ 10$. $\pi$ has a decomposition into $LTRM$-blocks as follows $$(8)(6)(5\ 7\ 9)\ (3) \ (2) (1\ 4\ 10),$$ and its left-to-right minima are $8,\ 6,\ 5,\ 3,\ 2,\ 1$ and its geometric representation is given in Figure \ref{F:re_per}. By Lemma \ref{L:132} that $\pi$ is $132$-avoiding.
\begin{figure}[h]
%\centering
\subfigure[]{\label{F:repre_per}\includegraphics[width=5cm]{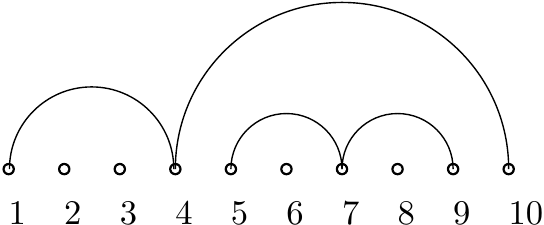}}\hspace{1cm}
\subfigure[]{\label{F:re_per1}\includegraphics[width=6cm]{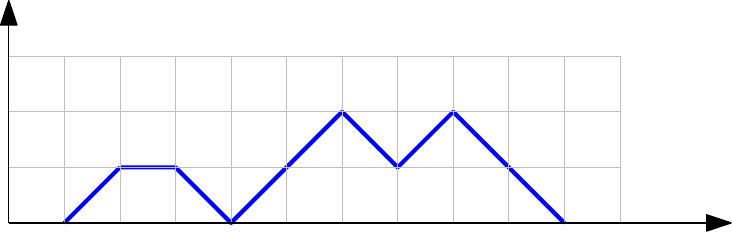}}
\caption{$(a)$: Geometric representation of $(8)(6)(5\ 7\ 9)(3)(2)(1\ 4\ 10)$; $(b)$: The Motzkin path $\psi(8\ 6\ 5\ 7\ 9 \ 3 \ 2\ 1 \ 4 \ 10)$}\label{F:re_per}
\end{figure}
%present a bijection $\psi$ from the set of permutations on $\{1,2,\dots,n+1\}$ avoiding $132$-as its pattern and avoiding $a(a+1)$ as its factor pattern to the set of Motzkin $n$-paths from $(1,0)$ to $(n+1,0)$.
We now present a new direct bijection, denoted by $\psi$, from Motzkin $n$-paths to $\s_{n+1}(132,2 \hat 13)$. Let $M_n$ be a Motzkin $n$-path from $(1,0)$ to $(n+1,0)$. The image $\psi(M_n)$ is a permutation on $[n+1]$ determined by the $LTRM$-blocks as follows: 
\begin{itemize}
\item[(i)] For each non-negative integer $h$ no greater than the height of $M_n$, we consider all proper Motzkin paths of maximal length on $y=h$ which are parts of $M_n$;
\item[(ii)] For each $h$, each such Motzkin path intersects to $y=h$ at some points whose abscissas rearranged in the increasing order create a $LTRM$-block of $\pi$;
\item[(iii)] Rearranging all blocks created in (ii) in the decreasing order of the first elements gives a permutation which is $\psi(M_n)$. 
\end{itemize}

\noindent For the example of the Motzkin $9$-path in Figure \ref{F:re_per1}, taking the intersection to $y=0$ we get the $LTRM$-block $(1\ 4\ 10)$; to $y=1$ we get three $LTRM$-blocks: $(2)$, $(3)$ and $(5\ 7\ 9)$; to $y=2$ we get two $LTRM$-blocks: $(8)$ and $(6)$. So its image by $\psi$ is $(8)(6)(5\ 7\ 9)(3)(2)(1\ 4\ 10)$. 

Conversely, let $\pi\in\s_{n+1}(132,2\hat 13)$. The inverse of $\pi$ by $\psi$ is a Motzkin $n$-path from $(1,0)$ to $(n+1,0)$ determined as follows. On the geometric representation of $\pi$, we consider turn by turn its points from $1$ to $n+1$.  When the position $i$ ($1\leq i \leq n+1$) is considered, we implement the following steps
\begin{enumerate}
\item  go-up one step if $i$ is a starting point of a semi circle;
\item go-flat one step if 
\begin{itemize}
\item $i$ is a single point and $i+1$ is either a single point or a starting point of a semi circle and simultaneously not an ending point of other semi-circle;
\item $i$ is an ending point of a semi circle and simultaneously not a starting point of other semi-circle and $i+1$ is either a single point or a starting point of a semi-circle;
\end{itemize}
\item go-down one step if 
\begin{itemize}
\item $i$ is a single point and $(i+1)$ is an ending point of a semi circular;
\item $i$ is an ending point of a semi circular and simultaneously not a starting point of other semi-circular and $i+1$ is an ending point of other semi circle;
\end{itemize}
%\item go-flat one step if $i$ is an ending point of a semi circle (and simultaneously not a starting point of other semi-circle) and $i+1$ is either a single point or a starting point of a semi-circle;
\item do nothing if $i=n+1$.
\end{enumerate}  
%We make our claim as follows
\begin{theorem} \label{T:mot}
The map $\psi$ defined above is a bijection from Motzkin $n$-paths to $\s_{n+1}(132, 2\hat 13)$.
\end{theorem}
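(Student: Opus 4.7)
The plan is to combine Theorem \ref{T:fac} and Lemma \ref{L:132} to reformulate membership in $\s_{n+1}(132, 2\hat 13)$ as: $\pi$ admits a geometric representation with pairwise non-overlapping LTRM-blocks, and $\pi$ contains no factor $a(a+1)$. Then I would show that $\psi(M_n)$ satisfies both conditions, that the inverse procedure produces a genuine Motzkin $n$-path, and that the two maps are mutually inverse by a local check at each abscissa.

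For the forward direction, each abscissa $i \in \{1, \ldots, n+1\}$ of $M_n$ sits at a unique height $h_i$ and lies on a unique maximal proper sub-Motzkin path on $y = h_i$, so rules (i)--(iii) partition $[n+1]$ into LTRM-blocks, each increasing (abscissas listed in left-to-right order), with first elements strictly decreasing across blocks by rule (iii). To get $132$-avoidance via Lemma \ref{L:132}, I verify non-overlap of blocks: blocks at the same height are horizontally disjoint by maximality of the proper sub-paths, while blocks at different heights are strictly nested, since any proper sub-path at height $h+1$ occurs inside an arc between two consecutive intersections of a parent sub-path at height $h$. To get $2\hat 13$-avoidance via Theorem \ref{T:fac}, I verify there is no factor $a(a+1)$: within a block, two consecutive intersection abscissas of a proper sub-Motzkin path on $y = h$ must be separated by a genuine excursion above $y=h$ (otherwise there would be a flat step on $y=h$, contradicting properness), so their $x$-coordinates differ by at least $2$; across blocks, rule (iii) makes the successive entries strictly decrease at the block boundary, again ruling out $a(a+1)$.

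For the inverse direction, given $\pi \in \s_{n+1}(132, 2\hat 13)$, its geometric representation has nested non-overlapping arcs, so rules (1)--(4) assign at each abscissa $i \leq n$ a step whose height change matches the intended $h_{i+1} - h_i$; the resulting walk begins at $(1, 0)$, stays at $y \geq 0$ by nesting, and ends at $(n+1, 0)$ because $n+1$ must lie on the outermost block at height $0$. A case analysis on the local configurations of $i$ and $i+1$ (single point, arc start, arc end, or arc interior on each side) shows that rules (1)--(3) exhaust all possibilities and match the Motzkin step types $u$, $f$, $d$ respectively. Composing the two maps step by step then recovers the starting Motzkin path and, respectively, the starting permutation, establishing bijectivity.

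The main obstacle will be the case analysis of the inverse rules: one must check that (1)--(3) are exhaustive and mutually exclusive, and that the ``simultaneously not'' clauses correctly separate the three step types. A subtle point is that the no-factor-$a(a+1)$ hypothesis on $\pi$ is precisely what guarantees that the sub-Motzkin paths read off at each height by the inverse are proper, since a factor $a(a+1)$ would correspond to a flat step on a horizontal line where the path also intersects, contradicting properness on that line.
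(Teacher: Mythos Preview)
Your plan is correct and follows essentially the same route as the paper: both use Lemma~\ref{L:132} (non-overlapping LTRM-blocks, split into the same-height and different-height cases) to obtain $132$-avoidance, and Theorem~\ref{T:fac} (absence of factors $a(a+1)$, argued via properness forbidding a flat step on the relevant line) to obtain $2\hat13$-avoidance. The one minor difference is that for the different-height case the paper packages the nesting argument through an auxiliary minimal sub-path $M^*$ on the lower line, whereas you argue nesting directly; and for the inverse you spell out the height-difference interpretation and the exhaustiveness of rules (1)--(3) more carefully than the paper, which dispatches the inverse in a single sentence.
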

\begin{proof}
Let $M_n$ be a Motzkin $n$-path. By the construction of $\psi$, $\psi(M_n)$ is a permutation and its $LTRM$-blocks satisfy the conditions in Lemma \ref{L:ltrm}. We prove $\psi(M_n)$ is $132$-avoiding by showing its $LTRM$-blocks pairwise non-overlapping (by Lemma \ref{L:132}). Taking two different $LTRM$-blocks of $\psi(M_n)$, we consider two following cases:
\begin{itemize}
\item[i)] These two $LTRM$-blocks are created by taking the intersection of $M_n$ to the same line $y=h$ in the construction of $\psi$. Then two corresponding proper Motzkin paths on this line are separated by at least a flat-step or a path under this line. Hence, their representations are non-overlapping. 
\item[ii)] These two $LTRM$-blocks are created by taking the intersection of $M_n$ to two different lines, say $y=h_1$ and $y=h_2$  ($h_1<h_2$). Assume that two corresponding proper Motzkin paths on these lines are $M^1$ and $M^2$ respectively. Take $M^*$ the Motzkin path of $M_n$ which is of the smallest length satisfying
\begin{itemize}
\item $M^*$ contains $M^2$;
\item $M^*$ is a Motzkin path on the line $y=h_1$. 
\begin{figure}[h]
\centering
\includegraphics[height=4cm]{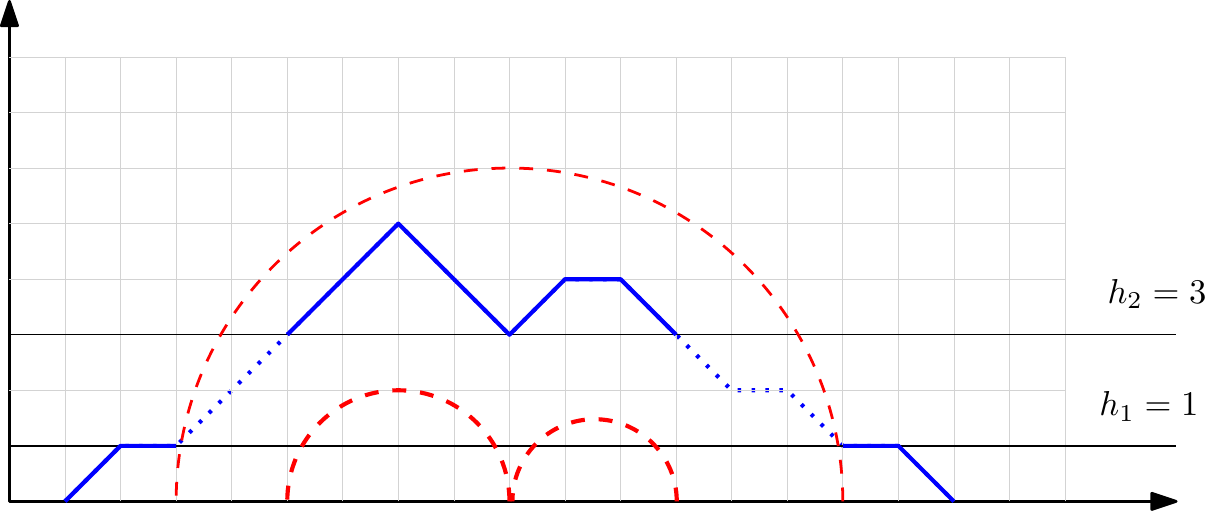}
\caption{Proper Motzkin on  $y=3$ and $y=1$}\label{F:Motz1}
\end{figure}
\end{itemize}
See Figure \ref{F:Motz1} for an illustration of $M^*$. Since $M^*$ is of smallest length and $h_1<h_2$, it is a proper Motzkin path on the line $y=h_1$ and moreover, it meets the line $y=h_1$ at exactly two points whose projection on $x$-axis are ending points of a semi-circle in the geometric representation of $\psi(M_n)$ (by the construction). Furthermore, the representation of the $LTRM$-block corresponding $M^2$ is within this semi-circle since $M^*$ contains $M^2$. By (i) this semi-circle is either a part of or disjoint to the  geometric representation of the $LTRM$-block corresponding to $M^1$. Hence, in any cases, two $LTRM$-blocks corresponding $M^1$ and $M^2$ are non-overlapping. %So that on $$ It is remarkable that if we have a Motzkin path over and of non-horizontal step on the line $y=h_2$ then we also have unique a Motzkin path of smallest length over and of non-horizontal step on the line $y=h_1$ that contains all elements of that Motzkin path. So that for each semi circular of block on the line $y=h_1$, the geometric representation of each block on the line $y=h_2$ is either contained in that semi circular (in case the Motzkin path of smallest length containing it is coincide to given semi circular) or separated to it (in case the Motzkin path of smallest length containing it is not coincide to the given semi circular, so it must be separated since the Motzkin path of smallest length and the given semi circular are separated). Hence, two blocks are not crossover.  
\end{itemize}

On the other hand, by the construction of $\psi$, $a$ and $a+1$ cannot belong to the same $LTRM$-block since otherwise the Motzkin path corresponding to this $LTRM$-block must contain a flat-step connecting $a$ and $(a+1)$ which is not a proper Motzkin path on that line. Hence, $\psi(M_n)$ avoids all factors $a(a+1)$ for $a=1,2,\dots,n$. By Theorem \ref{T:fac}, $\psi(M_n)\in \s_{n+1}(132,2 \hat 13)$. 

Conversely, it is remarkable that in the inversion we constructed above, we always go down or go flat (depending on the role of each element $i$ and $(i+1)$ in the permutation) to separate the proper Motzkin paths on same levels. So it must be the inversion of $\psi$. This completes the proof. 
\end{proof}

\subsection{Motzkin generating tree}\label{S:42}

In this section, we apply the ECO method to show that the Motzkin generating tree given in \cite{BDPP00} coded by permutations in $\s_n(321,3\bar{1}42)$ is also coded by permutations without any factor $a(a+1)$ in $\s_{n+1}(132)$. See Figure \ref{fig:Motzkin-tree} for first levels of the tree.  
%\begin{comment}

\begin{figure}[htb!]
   \centering
\includegraphics[width=12cm]{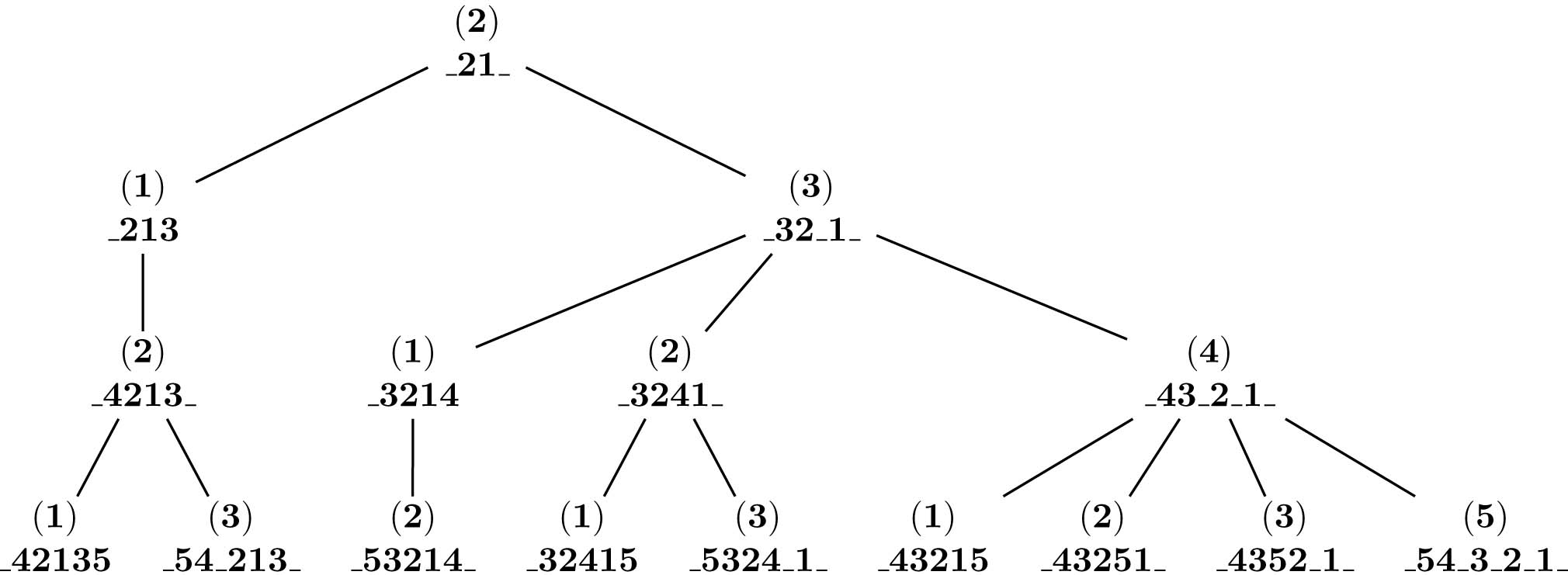}
    \caption{\small The first levels of the Motzkin generating tree coded by permutations avoiding $132$ and without any two consecutive integers adjacent. Each symbol `$\_$' represents an active site.}
    \label{fig:Motzkin-tree}
\end{figure}
%\end{comment}
The ECO method is used for the
enumeration and the recursive construction of combinatorial object
classes. This is a recursive description explaining how an object of size $n$ can be reached from one and only one object of inferior size \cite{BDPP98,BDPP00}.
More precisely, it consists to give a system of {\it succession
rules} for a combinatorial object class which induces a {\it generating
tree} such that each node is labeled: the set of successions rules
describes for each node the label of its successors. Generating trees are usually coded by permutations. The root is often coded by the identity of length one. Let $\pi$ be an
$n$-length permutation in a generating tree; each successor $\pi$ is
obtained from $\pi$ by inserting $n+1$ into certain positions also known as the {\it active
sites} of $\pi$. Notice that the sites are numbered from left to
right, from $1$ to $n+1$. Denote by $\pi^{\downarrow i}$ the permutation obtained from $\pi$ by inserting $n+1$ to its $i^{th}$ site.

In \cite{BDPP00}, a system of succession rules of Motzkin generating tree $(\Omega)$  is given by  :

\[(\Omega) \left\{\begin{array}{ll}
            (2) \\
            (k) \rightsquigarrow (k+1) (k-1) (k-2) \ldots (2) (1) .
        \end{array}
   \right. \]
   
 \begin{theorem} 
   Each level $n$ of the generating tree $(\Omega)$ can be coded by the permutations without any factor $a(a+1)$ in $\s_{n+1}(132)$. The root is coded by $21$.
\end{theorem}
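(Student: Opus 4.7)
The plan is to define the label of $\pi \in \s_{n+1}(132)$ without factor $a(a+1)$ to be the number of its \emph{active sites}, meaning positions $i \in \{1, \ldots, n+2\}$ such that the permutation $\pi^{\downarrow i}$ obtained by inserting $n+2$ at position $i$ still lies in $\s_{n+2}(132)$ and has no factor $a(a+1)$. The base case is immediate: the root $21 \in \s_2$ has active sites $i=1$ and $i=3$ (producing $321$ and $213$), matching its label $(2)$. I would then proceed by induction on the level, showing the count of active sites obeys the succession rule $(k) \rightsquigarrow (k+1)(k-1)(k-2)\cdots(1)$, which is what identifies the two tree structures.

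The first technical step characterizes active sites: $i$ is active if and only if (a) the prefix $\pi_1 \cdots \pi_{i-1}$ consists of the $i-1$ largest values of $[n+1]$, and (b) $i = 1$ or $\pi_{i-1} \neq n+1$. Condition (a) is equivalent to $\pi^{\downarrow i}$ being $132$-avoiding, since in a $132$-avoiding permutation the maximum separates a larger left part from a smaller right part, and the new maximum is $n+2$. Condition (b) is equivalent to $\pi^{\downarrow i}$ having no new factor, since the insertion can only create the forbidden factor $(n+1)(n+2)$, and this happens precisely when $\pi_{i-1}=n+1$. Call positions satisfying (a) the \emph{split-points} of $\pi$. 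Because $n+1$ occurs at a unique position $p$, condition (b) excludes exactly one split-point, namely $p+1$; hence $\pi$ has label $k$ iff it has $k+1$ split-points.

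Let $i_1 = 1 < i_2 < \cdots < i_k$ be the active sites of $\pi$ and let $\sigma^{(j)}$ be the reduction of the suffix $\pi_{i_j}\pi_{i_j+1}\cdots \pi_{n+1}$. By (a) this suffix uses exactly the values $\{1, \ldots, n+2-i_j\}$, so $\sigma^{(j)}$ is literally the suffix as a word. Applying the same characterization to $\pi^{\downarrow i_j}$ (whose maximum $n+2$ sits at position $i_j$) shows that the number of its active sites equals the number of split-points of $\sigma^{(j)}$. For $j=1$ we have $\sigma^{(1)}=\pi$, giving $k+1$ split-points and matching the first successor label $(k+1)$. For $j \geq 2$, split-points of $\sigma^{(j)}$ correspond via $l \mapsto i_j + l - 1$ to split-points of $\pi$ at positions $\geq i_j$; since the unique non-active split-point $p+1$ of $\pi$ satisfies $p+1 < i_2 \leq i_j$, these split-points are exactly $i_j, i_{j+1}, \ldots, i_k$, giving $k-j+1$ active sites of $\pi^{\downarrow i_j}$ and matching the $j$-th successor label $(k-j+1)$.

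Finally, to see the map really is a bijection onto the whole set, I would verify that deleting $n+2$ from any $\pi' \in \s_{n+2}(132)$ without factor $a(a+1)$ produces a valid parent. If $\pi'_j=n+2$ with $1<j<n+2$, then $\pi'_{j-1}>\pi'_{j+1}$ (else $\pi'_{j-1}(n+2)\pi'_{j+1}$ is a $132$), so deletion cannot introduce a factor $a(a+1)$; also $\pi'_{j-1}\neq n+1$ by the no-factor hypothesis on $\pi'$, so $j$ is genuinely an active site of the parent. The main obstacle is the bookkeeping in the third step: I must check that insertion of $n+2$ at $i_j$ introduces no spurious split-points in $\pi^{\downarrow i_j}$ beyond those inherited from $\sigma^{(j)}$, which is exactly what yields the arithmetic $k-j+1$ required by the rule $(\Omega)$.
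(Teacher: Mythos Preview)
Your proposal is correct and follows essentially the same ECO strategy as the paper: define the label of a permutation as its number of active sites, and verify that insertion at each active site produces children with labels $(k+1),(k-1),\ldots,(1)$ as required by $(\Omega)$.

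The difference is one of packaging. The paper argues case by case: for $\pi^{\downarrow 1}$ it lists the $k+1$ active sites (the old ones shifted, plus the newly activated site just after $n$); for $\pi^{\downarrow s_u}$ with $u\geq 2$ it argues that all sites left of the inserted maximum except the first become inactive via a $132$ pattern with $\pi_1(n{+}2)(n{+}1)$. Your split-point characterization (condition~(a)) makes this uniform: active sites of any child are counted by split-points of the corresponding suffix, and the arithmetic $k+1,\,k-1,\ldots,1$ drops out at once. This is a genuine expository improvement, though not a different idea.

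Two small points to tighten. First, the assertion $p+1<i_2$ deserves one line of justification: for $2\leq l\leq p$ the prefix $\pi_1\cdots\pi_{l-1}$ omits $n{+}1$ and hence cannot be the top $l-1$ values, so there are no split-points strictly between $1$ and $p+1$; thus $p+1$ is the second split-point and $i_2$ the third. Second, the ``obstacle'' you flag at the end is already handled by your own analysis: the split-points of $\pi^{\downarrow i_j}$ are exactly $\{1\}\cup\{m+1:m\geq i_j\text{ a split-point of }\pi\}$, and removing the one at $i_j+1$ (the position after the new maximum) leaves precisely the split-points of $\sigma^{(j)}$, so no spurious split-points arise.
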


\begin{proof} Let $\pi=\pi_1\pi_2\ldots\pi_n$ be any node in $(\Omega)$ having label $k$. Suppose that the indexes of its actives site are numbered from the left  $s_1,s_2, \ldots, s_k$. We have two following remarks:
\begin{enumerate}  
	\item If inserting $n+1$ creates a $132$, then  $n+1$ plays the role of $3$;
	\item If inserting $n+1$ creates a factor $a(a+1)$, then $n+1$ plays the role of $a+1$ and $n$ plays the role of $a$.
\end{enumerate}
	These two remarks cannot happen if $n+1$ becomes the leftmost element, so the site $1$ is always active, $s_1=1$. Furthermore, considering $n+2$ sites in $\pi^{\downarrow 1}$,
\begin{itemize}  
	\item the first site of $\pi^{\downarrow 1}$ is active;
	\item $k-1$ sites $s_2+1,s_3+1,\ldots, s_k+1$ in $\pi^{\downarrow 1}$ are active;
	\item the site just after $n$, which is not active in $\pi$ (otherwise it contains $n(n+1)$), is active in $\pi^{\downarrow 1}$ because: 
	\begin{itemize} 
		\item $n(n+2)$ is not of type $a(a+1)$,
		\item the subsequences $(n+1)(n+2)\pi_j$ and $n(n+2)\pi_j$ are not of pattern $132$, and
		\item any subsequence $\pi_i(n+2)\pi_j$ in $\pi^{\downarrow 1}$ is not of pattern 132, otherwise the subsequence $\pi_in\pi_j$ in $\pi$ is of pattern $132$. 
	\end{itemize}
	\item Since $(n+1)$ cannot play the role as $1$ and also $2$ in a pattern $132$ of $\pi^{\downarrow 1}$, the role in creating a pattern $132$ of $n+1$ in $\pi$ is the same to the one of $n+2$ in $\pi^{\downarrow 1}$. Hence, if $s$ is a site inactive in $\pi$, then $s+1$ is also inactive in $\pi^{\downarrow 1}$. 
\end{itemize}

Therefore, $\pi^{\downarrow 1}$ has label $k+1$.

On the other hand, suppose $n+1$ is inserted into active site $s_u$ ($u \geq 2$). Then the first site and the sites  $s_{u+1}+1,\dots,s_{k}+1$ of $\pi^{\downarrow s_u}$ are active. Furthermore, all sites except for the first site to the left $s_u+1$ of $\pi^{\downarrow s_u}$ are inactive since otherwise $\pi^{\downarrow s_u}$ contains the subsequence $\pi_1(n+2)(n+1)$ which is of pattern $132$. Moreover, $n$ is always on the left of $n+1$ in $\pi^{\downarrow s_u}$, otherwise $\pi^{\downarrow s_u}$ contains the subsequence $\pi_1(n+1)n$ which is of pattern $132$. Similarly as above, all inactive sites in $\pi$ shifted by $1$ are also not inactive in $\pi^{\downarrow s_u}$. Hence, there are $k-u+1$ sites of $\pi^{\downarrow s_u}$ active.

The successors of $\pi$ in $(\Omega)$ thus receive the labels $k+1,k-1,\ldots,2,1$, respectively, as the active sites are considered in order from left to right.

\end{proof}

%\section{Conclusion and perspective}
%We have introduced a new permutation avoidance pattern (hatted pattern) which is a generalization of barred pattern. We investigate permutations avoiding some hatted pattern on the set of $132$-avoiding permutations. We obtain some nice results relating to Catalan and Motzkin numbers by showing explicit bijections (including of induced bijections from standard bijection and new bijection). It would be interesting to study permutation avoiding patterns of hatted form by restrictions on other classes of permutations or even independently. We believe that it would revisit some classical sequence by using this permutation avoidance pattern.  

\bibliographystyle{plain}
\bibliography{ECO,Hat}

\begin{thebibliography}{10}

\bibitem{BS00}
E.~Babson and E.~Steingr{\'i}msson.
\newblock Generalized permutation patterns and a classiﬁcation of the
  mahonian statistics.
\newblock {\em S{\'e}m. Lothar. Combin.}, 44, 2000.

\bibitem{BDPP98}
E.~Barcucci, A.~Del Lungo, E.~Pergola, and R.~Pinzani.
\newblock A methodology for plane tree enumeration.
\newblock {\em Discrete Mathematics}, 180:45--64, 1998.

\bibitem{BDPP00}
E.~Barcucci, A.~Del Lungo, E.~Pergola, and R.~Pinzani.
\newblock From {M}otzkin to {C}atalan permutations.
\newblock {\em Discrete Mathematics}, 217, Issue 1-3:33 -- 49, 2000.

\bibitem{BD06}
J.~L. Baril and P.~T. Do.
\newblock {ECO}-generation for p-generalized {F}ibonacci and {L}ucas
  permutations.
\newblock {\em Pure Math.Appl.}, 17(1-2):19 -- 37, 2006.

\bibitem{Bar11}
Jean-Luc Baril.
\newblock Classical sequences revisited with permutations avoiding dotted
  pattern.
\newblock {\em Electron. J. Combin.}, 18(1):Paper 178, 18, 2011.

\bibitem{Bon06}
M.~B{\'o}na.
\newblock {\em A walk through combinatorics: an introduction to enumeration and
  graph theory}.
\newblock World Scientific Pub., 2006.

\bibitem{Cal04}
D.~{Callan}.
\newblock {Two bijections for Dyck path parameters}.
\newblock {\em Preprint. http://arxiv.org/abs/math/0406381}, 2001.

\bibitem{Cal07}
D.~Callan.
\newblock Bijections from {D}yck paths to 321-avoiding permutations revisited.
\newblock {\em Preprint. http://arxiv.org/abs/0711.2684}, 2007.

\bibitem{CK08}
A.~Claesson and S.~Kitaev.
\newblock Classification of bijections between 321- and 132-avoiding
  permutations.
\newblock {\em DMTCS Proceedings}, 2008.

\bibitem{Deu99}
Emeric Deutsch.
\newblock Dyck path enumeration.
\newblock {\em Discrete Mathematics}, 204(1-3):167--202, 1999.

\bibitem{Eli04}
Sergi Elizalde.
\newblock Statistics on pattern-avoiding permutations, 2004.

\bibitem{Knu69}
D.~E. Knuth.
\newblock {\em The art of Computer Programming, Volume 1: Fundamental
  Algorithms}.
\newblock Addison-Wesley Professional, 1969.

\bibitem{Knu73}
D.~E. Knuth.
\newblock {\em The art of Computer Programming, Volume 3}.
\newblock Addison-Wesley Professional, 1973.

\bibitem{Kra01}
C.~Krattenthaler.
\newblock Permutations with restricted patterns and {D}yck paths.
\newblock {\em Adv. in Appl. Math.}, 27(2-3):510--530, 2001.
\newblock Special issue in honor of Dominique Foata's 65th birthday
  (Philadelphia, PA, 2000).

\bibitem{MT04}
Adam Marcus and G{\'a}bor Tardos.
\newblock Excluded permutation matrices and the {S}tanley-{W}ilf conjecture.
\newblock {\em J. Combin. Theory Ser. A}, 107(1):153--160, 2004.

\bibitem{PW01}
Paul Peart and Wen-Jin Woan.
\newblock Dyck paths with no peaks at height {$k$}.
\newblock {\em J. Integer Seq.}, 4(1):Article 01.1.3, 6 pp. (electronic), 2001.

\bibitem{Pud10}
Lara Pudwell.
\newblock Enumeration schemes for permutations avoiding barred patterns.
\newblock {\em Electron. J. Combin.}, 17(1):Research Paper 29, 27, 2010.

\bibitem{SS85}
R.~Simion and F.~W. Schmidt.
\newblock Restricted permutations.
\newblock {\em Europ. J. Combin.}, 6:383--406, 1985.

\bibitem{Sloane}
Sloane.
\newblock The on-line encyclopedia of integer sequences (oeis).

\bibitem{Stu09}
Christian Stump.
\newblock On bijections between {$231$}-avoiding permutations and {D}yck paths.
\newblock {\em S\'em. Lothar. Combin.}, 60:Art. B60a, 13, 2008/09.

\bibitem{Wes95}
Julian West.
\newblock Generating trees and the {C}atalan and {S}chr\"oder numbers.
\newblock {\em Discrete Math.}, 146(1-3):247--262, 1995.

\end{thebibliography}

\end{document}